\numberwithin{equation}{section}
\theoremstyle{plain}
\newtheorem{theorem}{Theorem}[section]
\newtheorem{lemma}[theorem]{Lemma}
\newtheorem{proposition}[theorem]{Proposition}
\newtheorem{corollary}[theorem]{Corollary}
\theoremstyle{definition}
\newtheorem{example}[theorem]{Example} 
\theoremstyle{definition}
\newtheorem{remark}[theorem]{Remark}
\newtheorem*{fact}{Fact}
\newenvironment{rlist}
   {\begin{list}{}{\setlength{\labelsep}{0.2cm}
\setlength{\itemsep}{.125cm}
\setlength{\topsep}{0cm}
                   \setlength{\labelwidth}{0.7cm}
                      \setlength{\leftmargin}{0.75cm}}}  
   {\end{list}}
\newcommand{\cop}[1]{\mathbb{#1}}
\newcommand{\lra}{\leftrightarrow}
\newcommand{\lan}{\langle}
\newcommand{\ran}{\rangle}
\newcommand{\mt}{\land}
\newcommand{\jn}{\lor}
\newcommand{\eq}{\approx}
\newcommand{\m}[1]{{\bf {#1} }}
\newcommand{\f}{\ensuremath{\varphi}}
\newcommand{\ps}{\ensuremath{\psi}}
\newcommand{\si}{\ensuremath{\sigma}}
\newcommand{\de}{\ensuremath{\delta}}
\newcommand{\cg}[1]{{\rm Cg}_{_{#1}}}
\newcommand{\cls}[1]{\mathcal{#1}}
\newcommand{\mdl}[1]{\models_{#1}}
\newcommand{\der}[1]{\vdash_{#1}}
\newcommand{\notmdl}[1]{\not\models_{#1}}
\newcommand{\lgc}[1]{\ensuremath{{\sf #1}}}
\newcommand{\adm}[1]{\mathrel{\makebox{\raisebox{.4ex}{\scriptsize $\mid$}\raisebox{.28ex}{\footnotesize $\! \sim$}}_{\lgc{#1}}}}
\newcommand{\De}{\Delta}
\newcommand{\Ga}{\Gamma}
\newcommand{\Si}{\Sigma}
\newcommand{\The}{\Theta}
\newcommand{\F}{\m{F}}
\newcommand{\Tm}{\m{Tm}}
\newcommand{\Eqc}{\mathrm{Eq}}
\newcommand{\Tmc}{\mathrm{Tm}}
\newcommand{\con}[1]{\textup{Con}({#1})}   
\newcommand{\lang}{\mathcal{L}}
\newcommand{\xbar}{\overline{x}}
\newcommand{\ybar}{\overline{y}}
\newcommand{\zbar}{\overline{z}}
\newcommand{\eps}{\ensuremath{\varepsilon}}
\newcommand{\N}{\mathbb{N}}
\newcommand{\V}{\mathcal{V}}
\newcommand{\K}{\mathcal{K}}
\newcommand{\Lat}{\mathcal{L}\mathit{at}}
\newcommand{\DLat}{\mathcal{DL}\mathit{at}}
\newcommand{\DiagpA}{{\rm Diag}^+(\m{A})}
\begin{document}

%%%%%%%%%%%%%%%%%%%%%%%%%%%%%%%%%%%%%%%%%%%%%%%%%%%%%%%%%%%%%%%%%%%%%%
%% FRONT MATTER
%%%%%%%%%%%%%%%%%%%%%%%%%%%%%%%%%%%%%%%%%%%%%%%%%%%%%%%%%%%%%%%%%%%%%%

\title[Deciding dependence in logic and algebra]{\LARGE Deciding dependence in logic and algebra}

\author[G. Metcalfe]{George Metcalfe}
\address{Mathematical Institute, University of Bern\\
Sidlerstrasse 5, 3012 Bern, Switzerland}
\email{george.metcalfe@math.unibe.ch}

\author[N. Tokuda]{Naomi Tokuda}
\address{Mathematical Institute, University of Bern\\
Sidlerstrasse 5, 3012 Bern, Switzerland}
\email{naomi.tokuda@math.unibe.ch}

%% Thanks (Optional)
\thanks{This research was supported by Swiss National Science Foundation grant 200021$\_$165850.}

%% Dedication (Optional)
\dedicatory{Dedicated to Dick de~Jongh}

%% AMS subject classification; see http://www.ams.org/msc
%% List classification codes in order of relevance
%\subjclass{06F05, 06F15, 03G10}

%% Key words and phrases
\keywords{Dependence, Uniform Interpolation, Coherence, Lattices}

\begin{abstract}
We introduce a universal algebraic generalization of de Jongh's notion of dependence for formulas of  intuitionistic propositional logic, relating it to a notion of dependence defined by Marczewski for elements of an algebraic structure. Following ideas of de~Jongh and Chagrova, we show how constructive proofs of (weak forms of) uniform interpolation can be used to decide dependence for varieties of abelian $\ell$-groups, MV-algebras, semigroups,  and modal algebras. We also consider minimal provability results for dependence, obtaining in particular a complete description and decidability of dependence for the variety of lattices.
\end{abstract}

\maketitle

%%%%%%%%%%%%%%%%%%%%%%%%%%%%%%%%%%%%%%%%%%%%%%%%%%%%%%%%%%%%%%%%%%%

\section{Introduction}\label{sec:introduction}

In~\cite{dJC95} de~Jongh and Chagrova introduced an intriguing notion of dependence of formulas for intuitionistic propositional logic $\lgc{IPC}$ as an analogue of the usual notion of dependence of vectors in linear algebra.\footnote{This notion was previously defined by de~Jongh in~\cite{deJ82} using different terminology and appears also in  Lemmon's textbook {\em Beginning Logic}~\cite{Lem65} in the context of comparing two formulas of classical propositional logic. For a detailed history and comparison of this and other notions of dependence in logic, we refer to~\cite{Hum20}. Details of the seemingly unrelated but fascinating field of {\em dependence logic} may be found in~\cites{Vaa07,YV16}.} Formulas $\f_1,\dots,\f_n$ are said to be {\em $\lgc{IPC}$-dependent} if there exists a formula $\ps(p_1,\dots,p_n)$ such that $\der{\lgc{IPC}} \ps(\f_1,\dots,\f_n)$, but  $\not\der{\lgc{IPC}} \ps(p_1,\dots,p_n)$; otherwise, $\f_1,\dots,\f_n$ are said to be  {\em $\lgc{IPC}$-independent}. In~\cite{deJ82} it is shown that a single formula $\f$ is $\lgc{IPC}$-dependent if, and only if, either $\der{\lgc{IPC}} \lnot\lnot \f \to \f$ or $\der{\lgc{IPC}} \lnot\lnot \f$, and~\cite{dJC95} provides a ``reasonably simple'' sequence of formulas $(\ps_i(p_1,p_2))_{i\in\N}$ such that any two formulas  $\f_1,\f_2$ are dependent if, and only if, $\der{\lgc{IPC}} \ps_i(\f_1,\f_2)$ for some $i\in\mathbb{N}$. Notably, it is also proved in~\cite{dJC95} that checking the $\lgc{IPC}$-dependence of any finite number of formulas is decidable, making ingenious use of Pitts' constructive proof of uniform interpolation for $\lgc{IPC}$~\cite{Pit92}.

In this paper, we extend the ideas of~\cite{dJC95} to a general universal algebraic setting (recalled in Section~\ref{sec:consequence}), defining the  $\V$-dependence of terms $t_1,\dots,t_n$ for an arbitrary variety (equational class)~$\V$. In Section~\ref{sec:algebraic}, we show that $t_1,\dots,t_n$ are $\V$-independent if, and only if, a certain homomorphism between finitely generated  $\V$-free algebras is an embedding, or, equivalently, a certain finitely generated subalgebra of a $\V$-free algebra is $\V$-free over $n$ generators. It then follows that $\V$-dependence is a special case of Marczewski's notion of dependence for elements of an algebraic structure~\cites{Mar58,Mar59,Mar61}, and a converse is obtained by relativizing $\V$-dependence to a given set of equations. Marczewski-dependence has been studied extensively in universal algebra (see~\cite{Gra08}*{Chapter 5}), leading in particular to the introduction of $v^*$-algebras~\cite{Nar61},  later rediscovered by semigroup theorists in the guise of independence algebras~\cite{Gou95}.

In Section~\ref{sec:deciding}, we turn our attention to the problem of deciding $\V$-dependence. Generalizing the proof strategy of~\cite{dJC95}, we show that $\V$-dependence is decidable (even relative to a finite set of equations) if there is a constructive proof that $\V$ is {\em coherent}, i.e., that every finitely generated subalgebra of a finitely presented algebra in $\V$ is finitely presented. The property of coherence originated in sheaf theory and has been studied widely in algebra (see, e.g.,~\cites{Sou70,Sch83,Gou92}), and from a more general model-theoretic perspective in~\cites{Whe76,Whe78}. In~\cite{KM18} it was proved that coherence and deductive interpolation are jointly equivalent to the property of right uniform deductive interpolation considered in~\cite{vGMT17}. 

Constructive proofs of coherence are implicit in uniform interpolation proofs for, e.g., $\lgc{IPC}$~\cite{Pit92}, the modal logics $\lgc{GL}$ and $\lgc{S4Grz}$~\cite{Bil07}, and the varieties of abelian $\ell$-groups and MV-algebras~\cites{MMT14,vGMT17}. However, coherence is a quite rare property for non-locally finite varieties, at least those corresponding to modal, substructural, and other non-classical logics~\cites{KM18,KM18a}. We therefore also consider a weaker condition for deciding $\V$-dependence that requires only a constructive proof that finitely generated $\V$-free algebras are coherent, and is satisfied, for example, by the non-coherent variety of groups. Even when such a constructive proof is not available, however, there may exist other methods for deciding $\V$-dependence. Notably, the dependence problem for the variety of semigroups corresponds precisely to checking whether a finite set of words is a code, solved by the famous Sardinas-Patterson algorithm~\cite{SP53}, and the dependence problem for the variety of modal algebras can be decided using bisimulation-based methods described in~\cite{LW11} for calculating uniform interpolants (when they exist) for the description logic $\lgc{ALC}$.

In Section~\ref{sec:minimal}, following again ideas of~\cite{dJC95}, we consider minimal sets of equations for checking $\V$-dependence for some variety~$\V$.  In particular, we provide finite minimal sets of equations for dependence in both the locally finite variety of distributive lattices and the non-locally finite variety of lattices, obtaining a first proof that the dependence problem for the variety of lattices is decidable. Finally,  in Section~\ref{sec:openproblems}, we conclude the paper with a short list of open problems.

%%%%%%%%%%%%%%%%%%%%%%%%%%%%%%%%%%%%%%%%%%%%%%%

\section{Equational consequence and free algebras}\label{sec:consequence}

Let us begin by recalling some elementary material on universal algebra, referring to~\cite{BS81} for further details and references. We assume that $\lang$ is an algebraic language and that an {\em $\lang$-algebra} $\m{A}$ is a (first-order) structure for this language with universe $A$ and fundamental operations $f^\m{A}$ for each function symbol $f$ of $\lang$. For any set of variables $\xbar$, we denote by $\Tmc(\xbar)$ the set of {\em $\lang$-terms over $\xbar$}, and by $\Eqc(\xbar) := \Tmc(\xbar)\times \Tmc(\xbar)$, the set of {\em $\lang$-equations over $\xbar$}. For $\Tmc(\xbar)\neq\emptyset$ (i.e., when $\xbar\neq\emptyset$ or $\lang$ contains a constant), we denote by $\Tm(\xbar)$ the \emph{$\lang$-term}  {\em algebra over $\xbar$}.  We also write $t(\xbar)$,  $\eps(\xbar)$, or $\Si(\xbar)$ to mean that the variables occurring in an $\lang$-term $t$, an $\lang$-equation $\eps$, or a set of $\lang$-equations $\Si$, are included in $\xbar$, and assume that $\xbar$, $\ybar$, etc. are disjoint sets, writing $\xbar,\ybar$ to denote their disjoint union. Given an $\lang$-term $t(x_1,\ldots,x_n)$ and an $\lang$-algebra $\m{A}$, we denote by $t^\m{A}$ the induced {\em term-function} from $A^n$ to $A$.

For any homomorphism $h\colon \m{A}\to\m{B}$ between $\lang$-algebras $\m{A}$ and $\m{B}$, its {\em kernel} $\ker(h) := \{\lan a,b\ran\in A\times A \mid h(a)=h(b)\}$ forms a {\em congruence} of $\m{A}$: that is, an equivalence relation that is preserved by the fundamental operations of $\m{A}$. Moreover, the converse is also true; every congruence of $\m{A}$ is the kernel of some homomorphism with domain $\m{A}$. For any $\lang$-algebra $\m{A}$, the set $\con{\m{A}}$ of congruences of $\m{A}$ forms a complete lattice ordered by inclusion $\subseteq$ with greatest element $\nabla_A := A\times A$ and least element $\Delta_A := \{\lan a,a \ran\mid a\in A\}$. Given $S \subseteq A \times A$,  the congruence  $\cg{\m{A}}(S)$ of $\m{A}$ {\em generated by $S$} is the smallest congruence of $\m{A}$ containing $S$. A congruence $\The$ of $\m{A}$ is {\em finitely generated} if $\The=\cg{\m{A}}(S)$ for some finite $S\subseteq A\times A$.

Let $\cop{H}$, $\cop{I}$, $\cop{S}$, and $\cop{P}$ denote the class operators of taking homomorphic images, isomorphic images, subalgebras, and products, respectively. A class of $\lang$-algebras $\K$ is called a {\em variety} if it is closed under $\cop{H}$, $\cop{S}$, and $\cop{P}$. By theorems of Birkhoff and Tarski, respectively, $\K$ is a variety if, and only if, it is an equational class, and $\cop{HSP}(\K)$ is the variety generated by~$\K$.

\begin{example}
A {\em Heyting algebra} is an algebra $\lan H,\mt,\jn,\to,0,1\ran$ such that $\lan H,\mt,\jn,0,1 \ran$ is a bounded distributive lattice (with $a \le b :\Leftrightarrow a\land b=a$) and $\to$ is the residual of $\mt$; that is, $a \le b\to c$ if, and only if, $a\mt b \le c$ for all $a,b,c\in H$. Heyting algebras form a variety $\cls{HA}$ that provides the algebraic semantics for intuitionistic propositional logic.
\end{example}

Equational consequence for a class $\K$ of $\lang$-algebras may be defined as follows. For a set of $\lang$-equations $\Si \cup \{\eps\}$ containing exactly the variables in a set $\xbar$, 
\[
\begin{array}{rcc}
\Si \mdl{\K} \eps & :\Longleftrightarrow & \text{for every $\m{A} \in \K$ and homomorphism $e
\colon \Tm(\xbar) \to \m{A}$,}\\[.05in] 
& &  \Si \subseteq \ker(e) \enspace \Longrightarrow \enspace \eps \in \ker(e).
\end{array}
\]
For a set of $\lang$-equations $\Si \cup \De$, we write $\Si \mdl{\K} \De$ if $\Si \mdl{\K} \eps$ for all $\eps \in \De$. 

In the case where $\K$ is a variety, we may reformulate equational consequence in terms of congruences of $\K$-free algebras. Let us first recall the construction of $\K$-free algebras for an arbitrary class $\K$ of $\lang$-algebras over a set $\xbar$, assuming that either $\lang$ contains a constant or $\xbar$ is non-empty. Let $\The_\K(\xbar)$ be the smallest congruence of $\Tm(\xbar)$ such that the quotient by this congruence embeds into a member of $\K$, i.e.,
\[
\The_\K(\xbar) := \bigcap \{\The \in\con{\Tm (\xbar)}  \mid \Tm (\xbar)/\The \in \cop{IS}(\K)\}.
\]
The {\em $\K$-free algebra over $\xbar$} may then be defined as
\[
\m{F}_{\K}(\xbar)  := \Tm(\xbar) / \The_{\K}(\xbar).
\]
It follows that for any $s,t\in \Tmc(\xbar)$,
\[
\lan s,t \ran\in\The_\K(X)\enspace \iff\enspace\: \mdl{\K} s\eq t \enspace \iff\enspace\: \mdl{\m{F}_{\K}(\xbar)}s \eq t.
\]
Where appropriate, we deliberately confuse $\lang$-terms, $\lang$-equations, and sets of $\lang$-equations with the corresponding elements, pairs of elements, and sets of pairs of elements from $\F_\K(\xbar)$. Hence for $s,t\in \Tmc(\xbar)$,
\[
\mdl{\K} s\eq t \enspace \iff\enspace s=t \: \text{ in $\m{F}_{\K}(\xbar)$.}
\]
Also, when the class of algebras $\K$ is clear from the context, we drop the subscript and write simply $\F(\xbar)$.

The following lemma expresses the crucial connection between equational consequence in a variety and congruences on the free algebras of that variety.

\begin{lemma}[c.f.~{\cite{MMT14}*{Lemma~2}}]\label{l:eqconseq}
For any variety $\V$ and $\Si\cup\De \subseteq \Eqc(\xbar)$,
\[
\Si \models_\V \De\enspace \iff\enspace \cg{\F(\xbar)}(\De) \subseteq \cg{\F(\xbar)} (\Si).
\]
\end{lemma}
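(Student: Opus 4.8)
The plan is to reduce the statement to the case of a single equation, to pass from homomorphisms out of the term algebra to homomorphisms out of $\F(\xbar)$, and then to exploit that $\V$, being a variety, is closed under homomorphic images. I would first record two easy reductions. Since $\cg{\F(\xbar)}(\De)$ is by definition the smallest congruence of $\F(\xbar)$ containing $\De$, the inclusion $\cg{\F(\xbar)}(\De) \subseteq \cg{\F(\xbar)}(\Si)$ holds if, and only if, $\De \subseteq \cg{\F(\xbar)}(\Si)$, that is, $\eps \in \cg{\F(\xbar)}(\Si)$ for every $\eps \in \De$. As $\Si \models_\V \De$ means by definition that $\Si \models_\V \eps$ for each $\eps \in \De$, it suffices to prove the single-equation equivalence $\Si \models_\V \eps \iff \eps \in \cg{\F(\xbar)}(\Si)$.

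Next I would make the identification in the statement precise. Writing $\pi \colon \Tm(\xbar) \to \F(\xbar)$ for the canonical quotient map, an equation $s \eq t$ over $\xbar$ is identified with the pair $\lan \pi(s),\pi(t)\ran \in \F(\xbar) \times \F(\xbar)$, so $\Si$ and $\eps$ become a set of pairs and a pair of elements of $\F(\xbar)$. Because $\V$ is a variety we have $\The_\V(\xbar) \subseteq \ker(e)$ for every homomorphism $e \colon \Tm(\xbar) \to \m{A}$ with $\m{A} \in \V$ (this is exactly the characterization of $\The_\V(\xbar)$ recalled above), so each such $e$ factors uniquely as $e = \bar e \circ \pi$ for a homomorphism $\bar e \colon \F(\xbar) \to \m{A}$. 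Under the identification, $\Si \subseteq \ker(e)$ and $\eps \in \ker(e)$ are then equivalent to $\Si \subseteq \ker(\bar e)$ and $\eps \in \ker(\bar e)$, so $\Si \models_\V \eps$ says precisely that every homomorphism $\bar e$ from $\F(\xbar)$ into a member of $\V$ whose kernel contains $\Si$ also has $\eps$ in its kernel.

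The decisive step, and the only place where closure under $\cop{H}$ is used, is the observation that the kernels of such homomorphisms $\bar e$ are exactly all the congruences of $\F(\xbar)$: any congruence $\The$ is the kernel of the canonical map $\F(\xbar) \to \F(\xbar)/\The$, and $\F(\xbar)/\The \in \V$ since $\F(\xbar) \in \V$ and $\V$ is closed under homomorphic images. Hence $\Si \models_\V \eps$ is equivalent to the statement that every congruence $\The$ of $\F(\xbar)$ with $\Si \subseteq \The$ satisfies $\eps \in \The$. Since $\cg{\F(\xbar)}(\Si)$ is the intersection of all such congruences, this holds if, and only if, $\eps \in \cg{\F(\xbar)}(\Si)$, which completes the argument.

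The congruence-theoretic content here is light; I expect the only delicate point to be the bookkeeping around the identification of syntactic equations over $\xbar$ with pairs of elements of $\F(\xbar)$ and the factorization of homomorphisms through $\pi$, including checking that restricting attention to homomorphisms out of $\Tm(\xbar)$ for this fixed $\xbar$ (rather than the exact variable set of each equation) does not change the consequence relation. Once that translation is in place, the equivalence is immediate from the fact that a variety is closed under quotients.
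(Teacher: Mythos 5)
Your proof is correct and complete (modulo the standard facts that $\F(\xbar)\in\V$ and that the consequence relation is insensitive to enlarging the variable set, both of which you rightly flag as routine). Note that the paper does not prove this lemma at all but cites it as \cite{MMT14}*{Lemma~2}; your argument --- reduction to a single equation, factorization of every homomorphism $e\colon\Tm(\xbar)\to\m{A}$ with $\m{A}\in\V$ through the quotient $\pi\colon\Tm(\xbar)\to\F(\xbar)$, and the observation that by closure under $\cop{H}$ the kernels of homomorphisms from $\F(\xbar)$ into members of $\V$ are exactly the congruences of $\F(\xbar)$ --- is precisely the standard argument behind that cited result.
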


In what follows, we will omit mention of the language $\lang$, assuming throughout that a class of algebras $\K$ is a class of $\lang$-algebras, and that terms, equations, and sets of equations are defined over this language. We will also adopt the useful notation $[n]$ to denote the set $\{1,\dots,n\}$ for $n\in\N$.

%%%%%%%%%%%%%%%%%%%%%%%%%%%%%%%%%%%%%%%%%%%%%%%

\section{An algebraic theory of dependence}\label{sec:algebraic}

We now have the tools available to formulate and study the de~Jongh notion of dependence in a more general algebraic setting. Let $\V$ be any variety. We call terms $t_1,\dots,t_n \in \Tmc(\xbar)$  $\V$-{\em dependent} if for some equation $\eps(y_1,\dots,y_n)$,
\[
\mdl{\V} \eps(t_1,\dots,t_n)\quad \text{and} \quad \not\mdl{\V} \eps;
\]
otherwise, we call $t_1,\dots,t_n$ $\V$-{\em independent}.

\begin{example}
If $\V$ is the variety of vector spaces over some fixed field $\m{K}$, this notion of independence coincides with the usual notion  in linear algebra. Just observe that terms $t_1,\dots,t_n \in \Tmc(\xbar)$ in the language with the usual group operations and scalar multiplication for each $\lambda\in K$ are $\V$-independent if, and only if, (without loss of generality) for any  equation $\eps(y_1,\dots,y_n)$ of the form $\lambda_1 y_1 + \dots + \lambda_n y_n \eq 0$ with $\lambda_1,\dots,\lambda_n\in K$,
\begin{align*}
\mdl{\V} \lambda_1 t_1 + \dots + \lambda_n t_n \eq 0 
& \enspace \Longrightarrow \enspace\: \mdl{\V} \lambda_1 y_1 + \dots + \lambda_n y_n \eq 0,
\end{align*}
and that $\mdl{\V} \lambda_1 y_1 + \dots + \lambda_n y_n \eq 0$ if, and only if, $\lambda_1 =  \dots =  \lambda_n = 0$.

\end{example}

\begin{example}
Let $\Lat$ be the variety of lattices and $\DLat$ the variety of distributive lattices, and consider the lattice terms
\[
t_1 := x_1 \land (x_2\lor x_3)
\quad\text{and}\quad
t_2 :=  x_2 \lor (x_1 \land x_3).
\]
Defining $\eps(y_1,y_2) := y_1 \le y_2$ (where $s\le t$ denotes $s \land t \eq s$), we have $\mdl{\DLat}\eps(t_1,t_2)$ and $\not\mdl{\DLat}\eps$, so $t_1$ and $t_2$ are $\DLat$-dependent. However, no such equation  exists in the case of lattices, so $t_1$ and $t_2$ are $\Lat$-independent. 
\end{example}

The next result provides equivalent characterizations of $\V$-independence.

\begin{proposition}\label{p:indpcechar}
Let $\V$ be a variety. For any terms $t_1,\dots,t_n \in \Tmc(\xbar)$ and variables $\ybar = \{y_1,\dots,y_n\}$, the following are equivalent:
\begin{rlist}
\item[\rm (1)]	$t_1,\dots,t_n$ are $\V$-independent.
\item[\rm (2)]	The homomorphism $h \colon \F(\ybar) \to \F(\xbar)$ defined by mapping $y_i$ to $t_i$ for each $i\in[n]$ is injective.
\item[\rm (3)]	$\cg{\F(\xbar,\ybar)}(\lan y_1, t_1\ran,\dots,\lan y_n,t_n\ran) \cap F(\ybar)^2 = \De_{F(\ybar)}$.
\end{rlist}
\end{proposition}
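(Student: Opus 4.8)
The plan is to establish $(1)\Leftrightarrow(2)$ by directly unwinding the definitions, and then to prove $(2)\Leftrightarrow(3)$ by factoring $h$ through a substitution map on the larger free algebra $\F(\xbar,\ybar)$.

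For $(1)\Leftrightarrow(2)$, I would identify each equation $\eps = (s\eq t)$ over $\ybar$ with the pair $\lan s,t\ran$ of elements of $\F(\ybar)$, using the convention of confusing terms with free-algebra elements. Since $h$ sends $y_i$ to $t_i$, it sends the element $s(\ybar)$ to $s(t_1,\dots,t_n)$, so $h(s)=h(t)$ holds exactly when $\mdl{\V}\eps(t_1,\dots,t_n)$, while $s=t$ in $\F(\ybar)$ holds exactly when $\mdl{\V}\eps$. Hence $t_1,\dots,t_n$ are $\V$-dependent---i.e., some $\eps$ satisfies $\mdl{\V}\eps(t_1,\dots,t_n)$ but $\not\mdl{\V}\eps$---precisely when there exist $s\neq t$ in $\F(\ybar)$ with $h(s)=h(t)$, that is, precisely when $h$ fails to be injective. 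Negating gives $(1)\Leftrightarrow(2)$.

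For $(2)\Leftrightarrow(3)$, I would introduce the substitution homomorphism $g\colon\F(\xbar,\ybar)\to\F(\xbar)$ fixing each $x_j$ and sending $y_i\mapsto t_i$, together with the canonical embedding $\iota\colon\F(\ybar)\hookrightarrow\F(\xbar,\ybar)$ realizing $\F(\ybar)$ as the subalgebra generated by $\ybar$ (an embedding because $s(\ybar)=t(\ybar)$ in $\F(\xbar,\ybar)$ iff $\mdl{\V}s\eq t$). Two facts drive the argument. First, $g\circ\iota=h$, since both send $y_i$ to $t_i$; this is immediate on generators. Second, writing $\The := \cg{\F(\xbar,\ybar)}(\lan y_1,t_1\ran,\dots,\lan y_n,t_n\ran)$, I claim $\ker(g)=\The$. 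Granting this, for $s,t\in F(\ybar)$ we have $\lan s,t\ran\in\ker(h)$ iff $\lan\iota(s),\iota(t)\ran\in\ker(g)=\The$, so $\ker(h)=\De_{F(\ybar)}$ iff $\The\cap F(\ybar)^2=\De_{F(\ybar)}$---which is exactly the equivalence of $(2)$ and $(3)$.

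It remains to prove $\ker(g)=\The$, which I expect to be the main obstacle. The inclusion $\The\subseteq\ker(g)$ is clear, as $g(y_i)=t_i=g(t_i)$ puts each generating pair into $\ker(g)$. For the reverse inclusion I would argue that the quotient $\F(\xbar,\ybar)/\The$ enjoys the universal property of $\F(\xbar)$: any map of the generators $\xbar\cup\ybar$ into an algebra of $\V$ that respects $\The$ is forced to send each $y_i$ to the value of $t_i$ on the images of $\xbar$, hence is determined by its restriction to $\xbar$ and factors uniquely through $\F(\xbar)$. Consequently the surjection $\F(\xbar,\ybar)/\The\to\F(\xbar)$ induced by $g$ is an isomorphism, giving $\ker(g)=\The$. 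Alternatively, one can obtain both inclusions from Lemma~\ref{l:eqconseq} by checking the corresponding entailments between the generating sets, but the universal-property argument is the most transparent. Everything else is routine bookkeeping with the term/element identification.
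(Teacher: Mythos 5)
Your proof is correct, and it shares the paper's basic skeleton --- both arguments factor $h$ through $\F(\xbar,\ybar)$ via the inclusion $\F(\ybar)\to\F(\xbar,\ybar)$ and the substitution homomorphism $g\colon\F(\xbar,\ybar)\to\F(\xbar)$, and your treatment of $(1)\Leftrightarrow(2)$ is the paper's $(1)\Rightarrow(2)$ together with its evident converse --- but the organization and the key technical step genuinely differ. The paper proves the cycle $(1)\Rightarrow(2)\Rightarrow(3)\Rightarrow(1)$: its step $(2)\Rightarrow(3)$ uses only the easy inclusion $\cg{\F(\xbar,\ybar)}(\lan y_1,t_1\ran,\dots,\lan y_n,t_n\ran)\subseteq\ker(g)$, and all the remaining work in $(3)\Rightarrow(1)$ is delegated to Lemma~\ref{l:eqconseq}, which converts the consequence $\{y_1\eq t_1,\dots,y_n\eq t_n\}\mdl{\V}\eps$ into membership of $\eps$ in the generated congruence. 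You instead prove two biconditionals, and your load-bearing claim is the full equality $\ker(g)=\The$ for $\The:=\cg{\F(\xbar,\ybar)}(\lan y_1,t_1\ran,\dots,\lan y_n,t_n\ran)$, established by showing that $\F(\xbar,\ybar)/\The$ has the universal mapping property of $\F(\xbar)$ (since $[y_i]=[t_i]$ the quotient is generated by the image of $\xbar$, and any assignment of $\xbar$ into a member of $\V$ extends by sending $y_i$ to the value of $t_i$); this argument is sound. It is worth noting that your equality $\ker(g)=\The$ is precisely the instance of Lemma~\ref{l:eqconseq} that the paper invokes: by that lemma $\The=\{\eps\in\Eqc(\xbar,\ybar)\mid \{y_1\eq t_1,\dots,y_n\eq t_n\}\mdl{\V}\eps\}$, and this set is easily seen to coincide with $\ker(g)$. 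So what your route buys is self-containedness and a stronger, reusable intermediate fact ($\F(\xbar,\ybar)/\The\cong\F(\xbar)$, i.e., adjoining new generators together with defining equations for them changes nothing), while the paper's route buys brevity: given Lemma~\ref{l:eqconseq}, only the trivial direction of your congruence computation is ever needed.
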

\begin{proof}\text{ }\\[.05in]
(1)\,$\Rightarrow$\,(2)\, Suppose that $t_1,\dots,t_n$ are $\V$-independent and that $h(u)=h(v)$ in $ \F(\xbar)$ for some $u,v\in \Tmc(\ybar)$. That is, $u(t_1,\dots,t_n) = v(t_1,\dots,t_n)$ in $\F(\xbar)$, and hence $\mdl{\V} u(t_1,\dots,t_n) \eq v(t_1,\dots,t_n)$.  But $t_1,\dots,t_n$ are $\V$-independent, so $\mdl{\V} u\eq v$ and $u=v$ in $\F(\ybar)$. Hence $h$ is injective.

(2)\,$\Rightarrow$\,(3)\,  Suppose that $h$ is injective. Let $f \colon \F(\ybar) \to \F(\xbar,\ybar)$ be the natural inclusion homomorphism and let $g\colon \F(\xbar,\ybar)\to \F(\xbar)$ be the homomorphism mapping $x\in\xbar$ to $x$ and $y_i$ to $t_i$ for each $i\in[n]$. Clearly, $h = g\circ f$ and $\cg{\F(\xbar,\ybar)}(\lan y_1, t_1\ran,\dots,\lan y_n,t_n\ran)\subseteq\ker(g)$, and hence, since $h$ is injective,
\[
\cg{\F(\xbar,\ybar)}(\lan y_1, t_1\ran,\dots,\lan y_n,t_n\ran) \cap F(\ybar)^2 \subseteq \ker(h) = \De_{F(\ybar)}.
\]
(3)\,$\Rightarrow$\,(1)\,
Suppose that $\cg{\F(\xbar,\ybar)}(\lan y_1, t_1\ran,\dots,\lan y_n,t_n\ran) \cap F(\ybar)^2 = \De_{F(\ybar)}$ and consider $\eps\in\Eqc(\ybar)$ with $\mdl{\V} \eps(t_1,\dots,t_n)$. Then $\{y_1\eq t_1,\dots,y_n\eq t_n\}\mdl{\V}\eps$ and an application of Lemma~\ref{l:eqconseq} yields
\[
\eps \in \cg{\F(\xbar,\ybar)}(\lan y_1, t_1\ran,\dots,\lan y_n,t_n\ran) \cap F(\ybar)^2 = \De_{F(\ybar)}.
\]
Hence, by Lemma~\ref{l:eqconseq}, also $\mdl{\V} \eps$. So $t_1,\dots,t_n$ are $\V$-independent.
\end{proof}

\begin{remark}\label{rem:independence}
Condition (3) of Proposition~\ref{p:indpcechar} can also be understood as a property of equational consequence. Given terms $t_1,\dots,t_n \in \Tmc(\xbar)$ and variables $\ybar = \{y_1,\dots,y_n\}$, define
\[
\Ga := \{y_1 \eq t_1,\dots,y_n \eq t_n\} \quad\text{and}\quad 
\Pi := \{\eps\in \Eqc(\ybar) \mid \Ga \mdl{\V} \eps\}.
\]
Then for any $\eps\in \Eqc(\ybar)$,
\[
\mdl{\V} \eps(t_1,\dots,t_n) \enspace \iff\enspace  \Ga\mdl{\V} \eps\enspace \iff\enspace \Pi \mdl{\V} \eps,
\]
and, corresponding to condition (3),
\[
t_1,\dots,t_n \text{ are $\V$-independent} \enspace \iff \enspace\: \mdl{\V}\Pi.
\]
\end{remark}

Let us now consider the relationship between this notion of dependence and the general algebraic notion introduced by Marczewski in~\cites{Mar58}. We say that elements $a_1,\dots,a_n$ of an algebra $\m{A}$ are {\em Marczewski-dependent} in $\m{A}$ if there exist terms $u(y_1,\dots,y_n),v(y_1,\dots,y_n)$ satisfying
\[
u^\m{A}(a_1,\dots,a_n) = v^\m{A}(a_1,\dots,a_n) \quad\text{and}\quad u^\m{A} \neq v^\m{A};
\]
otherwise, we call $a_1,\dots,a_n$  {\em Marczewski-independent}  in $\m{A}$.

\begin{remark}
Equivalently, $a_1,\dots,a_n \in A$ are  Marczewski-independent in $\m{A}$ if, and only if, they are distinct and generate a subalgebra of $\m{A}$ that is $\cop{HSP}(\m{A})$-free over the set of generators $\{a_1,\dots,a_n\}$~\cite{Mar59}.
\end{remark}

It is not hard to see that $\V$-dependence for a variety $\V$ corresponds to the Marczewski-dependence of elements of finitely generated $\V$-free algebras.

\begin{proposition}
Let $\V$ be a variety. Terms $t_1,\dots,t_n \in \Tmc(\xbar)$ are $\V$-dependent if, and only if, $t_1,\dots,t_n$ are Marczewski-dependent in $\F(\xbar)$.
\end{proposition}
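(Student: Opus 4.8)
The plan is to unfold both notions of dependence and observe that, once translated into statements about the free algebra $\F(\xbar)$, they become literally the same condition. Recall that an equation $\eps(y_1,\dots,y_n)$ is by definition a pair of terms $(u,v)$ with $u,v\in\Tmc(\ybar)$ and $\ybar=\{y_1,\dots,y_n\}$; thus the equations $\eps$ quantified over in the definition of $\V$-dependence and the pairs of terms $u,v$ quantified over in the definition of Marczewski-dependence range over exactly the same objects. It therefore suffices to match, for a fixed pair $(u,v)$, the two defining conjuncts on each side.

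First I would handle the ``satisfied'' conjunct. Writing $t_1,\dots,t_n$ also for the corresponding elements of $\F(\xbar)$ (per the convention identifying terms with elements), the quotient map $\Tm(\xbar)\to\F(\xbar)$ is a homomorphism, so $u^{\F(\xbar)}(t_1,\dots,t_n)$ is just the element $u(t_1,\dots,t_n)$, and likewise for $v$. Hence, using the characterization $s=t$ in $\F(\xbar)$ iff $\mdl{\V}s\eq t$, we obtain
\[
u^{\F(\xbar)}(t_1,\dots,t_n)=v^{\F(\xbar)}(t_1,\dots,t_n) \text{ in }\F(\xbar)
\quad\iff\quad
\mdl{\V} u(t_1,\dots,t_n)\eq v(t_1,\dots,t_n),
\]
which is exactly the first conjunct $\mdl{\V}\eps(t_1,\dots,t_n)$ in the definition of $\V$-dependence.

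The crux is the ``nontrivial'' conjunct, which rests on the lemma
\[
u^{\F(\xbar)}= v^{\F(\xbar)} \text{ as $n$-ary term functions on }\F(\xbar)
\quad\iff\quad
\mdl{\V} u\eq v .
\]
The backward direction is immediate since $\F(\xbar)\in\V$. For the forward direction I would evaluate both term functions at $n$ distinct free generators $x_1,\dots,x_n\in\xbar$, obtaining $u(x_1,\dots,x_n)=v(x_1,\dots,x_n)$ in $\F(\xbar)$, i.e.\ $\mdl{\V}u\eq v$ after renaming these generators back to $y_1,\dots,y_n$ (equational consequence being invariant under injective renaming of variables). Granting this lemma, $\not\mdl{\V}\eps$ translates to $u^{\F(\xbar)}\neq v^{\F(\xbar)}$, matching the second conjunct of Marczewski-dependence; combining the two translations then yields both directions of the proposition simultaneously, with $\eps=(u,v)$.

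The main obstacle is precisely the forward direction of this lemma, and the subtle point there is that it requires $\xbar$ to contain at least $n$ distinct variables: if $\xbar$ is too small the term functions on $\F(\xbar)$ may collapse (e.g.\ on the one-element free semilattice over a single generator every pair of term functions coincides), and the equivalence, together with the proposition itself, fails. I would therefore assume, as is standard in this setting, that $\xbar$ is (countably) infinite, so that $n$ distinct generators are always available; with that in place the remainder is routine bookkeeping with the free-algebra characterization of $\mdl{\V}$.
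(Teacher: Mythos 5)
Your proof is correct and takes essentially the same route as the paper: the paper's own proof is exactly this unfolding of the two definitions, matching the first conjuncts via the identification of $u^{\F(\xbar)}(t_1,\dots,t_n)$ with the element $u(t_1,\dots,t_n)$ of $\F(\xbar)$, and the second conjuncts via the equivalence $u^{\F(\xbar)} = v^{\F(\xbar)} \iff\: \mdl{\V} u \eq v$. Where you go beyond the paper is in isolating that last equivalence as the crux and observing that its forward direction needs $\xbar$ to contain at least $n$ distinct variables; the paper asserts the equivalence without comment, so its statement silently carries this hypothesis. Your caveat is genuine: for $\V$ the variety of semilattices, $\xbar=\{x\}$, and $t_1=t_2=x$, the terms $t_1,t_2$ are $\V$-dependent (witnessed by $y_1 \eq y_2$) yet Marczewski-independent in the one-element algebra $\F(\xbar)$ under the paper's term-function definition, since no two binary term functions on a one-element algebra differ; so the proposition as literally stated does require $|\xbar|\ge n$ (or $\xbar$ infinite, as you assume), and your write-up is the paper's argument with this missing side condition made explicit.
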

\begin{proof}
It suffices to observe that $t_1,\dots,t_n\in \Tmc(\xbar)$ are Marczewski-dependent in $\F(\xbar)$ if, and only if,  there exist terms $u(y_1,\dots,y_n),v(y_1,\dots,y_n)$ satisfying
\[
u^{\F(\xbar)}(t_1,\dots,t_n) = v^{\F(\xbar)}(t_1,\dots,t_n) \quad\text{and}\quad u^{\F(\xbar)} \neq v^{\F(\xbar)},
\]
or, equivalently, there exist terms $u(y_1,\dots,y_n),v(y_1,\dots,y_n)$ satisfying
\[
\mdl{\V} u(t_1,\dots,t_n) \eq v(t_1,\dots,t_n) \quad\text{and}\quad \mdl{\V} u\not\eq v,
\]
which holds if, and only if, $t_1,\dots,t_n$ are $\V$-dependent.
\end{proof}

Let us now consider a more general version of dependence defined for a variety $\V$ relative to a fixed set of equations. We call $t_1,\dots,t_n \in \Tmc(\xbar)$  $\V$-{\em dependent over $\Si\subseteq\Eqc(\xbar)$} if for some equation $\eps(y_1,\dots,y_n)$,
\[
\Si \mdl{\V} \eps(t_1,\dots,t_n)\quad \text{and} \quad \not\mdl{\V} \eps;
\]
otherwise, we call $t_1,\dots,t_n$ $\V$-{\em independent over $\Si$}.

To obtain a reformulation of this property analogous to Proposition~\ref{p:indpcechar}, we consider for $\Si\subseteq\Eqc(\xbar)$, the quotient algebra $\F(\xbar)/\cg{\F(\xbar)}(\Si)$, denoting its elements by $[t]_\Si$ for each $t\in \Tmc(\xbar)$.

\begin{proposition}\label{p:sigmaindpcechar}
Let $\V$ be a variety. For any $\Si\subseteq\Eqc(\xbar)$, $t_1,\dots,t_n \in \Tmc(\xbar)$, and $\ybar = \{y_1,\dots,y_n\}$, the following are equivalent:
\begin{rlist}
\item[\rm (1)]	$t_1,\dots,t_n$ are $\V$-independent over $\Si$.
\item[\rm (2)]	The homomorphism $h \colon \F(\ybar)\to\F(\xbar)/\cg{\F(\xbar)}(\Si)$ mapping $y_i$ to $[t_i]_\Si$ for each $i\in[n]$ is injective.
\item[\rm (3)]	$\cg{\F(\xbar,\ybar)}(\Si\cup\{\lan y_1, t_1\ran,\dots,\lan y_n,t_n\ran\}) \cap F(\ybar)^2 = \De_{F(\ybar)}$.
\end{rlist}
\end{proposition}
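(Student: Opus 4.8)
The plan is to establish the cycle $(1)\Rightarrow(2)\Rightarrow(3)\Rightarrow(1)$, following closely the proof of Proposition~\ref{p:indpcechar} but systematically replacing $\F(\xbar)$ by the quotient $\F(\xbar)/\cg{\F(\xbar)}(\Si)$ and tracking the extra generators coming from $\Si$. The translation device used throughout is the observation that, for $u,v\in\Tmc(\xbar)$, we have $[u]_\Si=[v]_\Si$ in $\F(\xbar)/\cg{\F(\xbar)}(\Si)$ if, and only if, $\lan u,v\ran\in\cg{\F(\xbar)}(\Si)$, which by Lemma~\ref{l:eqconseq} holds exactly when $\Si\mdl{\V}u\eq v$. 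For $(1)\Rightarrow(2)$, I would suppose $h(u)=h(v)$ for some $u,v\in\Tmc(\ybar)$; this unwinds to $[u(t_1,\dots,t_n)]_\Si=[v(t_1,\dots,t_n)]_\Si$ and hence, by the observation, to $\Si\mdl{\V}u(t_1,\dots,t_n)\eq v(t_1,\dots,t_n)$. Taking $\eps:=u\eq v$, independence over $\Si$ gives $\mdl{\V}u\eq v$, i.e.\ $u=v$ in $\F(\ybar)$, so $h$ is injective.

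For $(2)\Rightarrow(3)$, I would factor $h$ as $h=g\circ f$, where $f\colon\F(\ybar)\to\F(\xbar,\ybar)$ is the natural inclusion and $g\colon\F(\xbar,\ybar)\to\F(\xbar)/\cg{\F(\xbar)}(\Si)$ sends $x\mapsto[x]_\Si$ for each $x\in\xbar$ and $y_i\mapsto[t_i]_\Si$ for each $i\in[n]$. The point to check is that $\cg{\F(\xbar,\ybar)}(\Si\cup\{\lan y_1,t_1\ran,\dots,\lan y_n,t_n\ran\})\subseteq\ker(g)$: every pair of $\Si$ lies in $\ker(g)$ since it already lies in $\cg{\F(\xbar)}(\Si)$, and each $\lan y_i,t_i\ran$ lies in $\ker(g)$ by the definition of $g$. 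Intersecting with $F(\ybar)^2$ and using $\ker(g)\cap F(\ybar)^2=\ker(h)=\De_{F(\ybar)}$ (the reverse inclusion being trivial) then yields $(3)$.

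For $(3)\Rightarrow(1)$, I would take $\eps\in\Eqc(\ybar)$ with $\Si\mdl{\V}\eps(t_1,\dots,t_n)$ and observe that $\Si\cup\{y_1\eq t_1,\dots,y_n\eq t_n\}\mdl{\V}\eps$, since any model of the latter equations identifies $\eps$ with $\eps(t_1,\dots,t_n)$. Lemma~\ref{l:eqconseq} (applied in $\F(\xbar,\ybar)$) then places $\eps$ in $\cg{\F(\xbar,\ybar)}(\Si\cup\{\lan y_1,t_1\ran,\dots,\lan y_n,t_n\ran\})\cap F(\ybar)^2$, which by hypothesis equals $\De_{F(\ybar)}$, whence $\mdl{\V}\eps$ and independence over $\Si$ follows. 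The main difficulty here is purely bookkeeping rather than conceptual: one must be careful that $\Si\subseteq\Eqc(\xbar)$ is read correctly as a set of pairs inside $\F(\xbar,\ybar)$, that $g$ is well defined and genuinely factors $h$, and that the restriction-to-$F(\ybar)^2$ step exploits injectivity of $f$ so that $\ker(g)\cap F(\ybar)^2$ collapses to $\ker(h)$. Since none of these steps reintroduces the substitution computations of Proposition~\ref{p:indpcechar}, I expect the argument to be a routine relativization of that proof.
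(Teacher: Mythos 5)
Your proposal is correct and follows essentially the same route as the paper's own proof: the cycle $(1)\Rightarrow(2)\Rightarrow(3)\Rightarrow(1)$, the identical factorization $h=g\circ f$ through $\F(\xbar,\ybar)$, and the same two applications of Lemma~\ref{l:eqconseq}, just as in the relativization of Proposition~\ref{p:indpcechar}. The extra bookkeeping you flag (pairs of $\Si$ lying in $\ker(g)$, and $\ker(g)\cap F(\ybar)^2=\ker(h)$ via the inclusion $f$) is exactly what the paper leaves implicit, so nothing is missing.
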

\begin{proof}\text{ }\\[.05in]
(1)\,$\Rightarrow$\,(2)\, Suppose that $t_1,\dots,t_n$ are $\V$-independent over $\Si$ and  $h(u)=h(v)$ for some $u,v\in \Tmc(\ybar)$. It follows that $[u(t_1,\dots,t_n)]_\Si = [v(t_1,\dots,t_n)]_\Si$ in $\F(\xbar)/\cg{\F(\xbar)}(\Si)$ and, by Lemma~\ref{l:eqconseq}, that $\Si\mdl{\V} u(t_1,\dots,t_n) \eq v(t_1,\dots,t_n)$. Since, by assumption, $t_1,\dots,t_n$ are $\V$-independent over $\Si$, we have $\mdl{\V} u\eq v$. So $u=v$ in $\F(\ybar)$. That is, $h$ is injective.

(2)\,$\Rightarrow$\,(3)\, Suppose that $h$ is injective. Let $f \colon \F(\ybar) \to \F(\xbar,\ybar)$ be the natural inclusion homomorphism and let $g\colon \F(\xbar,\ybar)\to\F(\xbar)/\cg{\F(\xbar)}(\Si)$ be the homomorphism mapping $x\in\xbar$ to $[x]_\Si$ and $y_i$ to $[t_i]_\Si$ for each $i\in[n]$. Clearly, $h = g\circ f$ and $\cg{\F(\xbar,\ybar)}(\Si\cup\{\lan y_1, t_1\ran,\dots,\lan y_n,t_n\ran\})\subseteq\ker(g)$, and hence, since $h$ is injective,
\[
\cg{\F(\xbar,\ybar)}(\Si\cup\{\lan y_1, t_1\ran,\dots,\lan y_n,t_n\ran\}) \cap F(\ybar)^2 \subseteq\ker(h) = \De_{F(\ybar)}.
\]
(3)\,$\Rightarrow$\,(1)\,
Suppose that $\cg{\F(\xbar,\ybar)}(\Si\cup\{\lan y_1, t_1\ran,\dots,\lan y_n,t_n\ran\}) \cap F(\ybar)^2 = \De_{F(\ybar)}$ and $\Si\mdl{\V} \eps(t_1,\dots,t_n)$ for some $\eps\in\Eqc(\ybar)$. Then $\Si\cup\{y_1\eq t_1,\dots,y_n\eq t_n\}\mdl{\V}\eps$ and, by Lemma~\ref{l:eqconseq},
\[
\eps \in \cg{\F(\xbar,\ybar)}(\Si\cup\{\lan y_1, t_1\ran,\dots,\lan y_n,t_n\ran\}) \cap F(\ybar)^2=\De_{F(\ybar)}.
\]
Hence, by Lemma~\ref{l:eqconseq}, also $\mdl{\V} \eps$. So $t_1,\dots,t_n$ are $\V$-independent over $\Si$.
\end{proof}

\begin{remark}\label{rem:independenceoversigma}
$\V$-dependence over $\Si$ can again be understood as a property of equational consequence. For $t_1,\dots,t_n \in \Tmc(\xbar)$ and  $\ybar = \{y_1,\dots,y_n\}$,  let
\[
\Ga := \Si \cup \{y_1 \eq t_1,\dots,y_n \eq t_n\} \quad\text{and}\quad \Pi := \{\eps\in \Eqc(\ybar) \mid \Ga\mdl{\V} \eps\}.
\]
Then for any $\eps\in \Eqc(\ybar)$,
\[
\Si \mdl{\V} \eps(t_1,\dots,t_n) \enspace \iff\enspace  \Ga\mdl{\V} \eps\enspace \iff\enspace \Pi \mdl{\V} \eps,
\]
and, corresponding to condition (3),
\[
t_1,\dots,t_n \text{ are $\V$-independent over $\Si$} \enspace \iff \enspace\: \mdl{\V}\Pi.
\]
\end{remark}

A natural question to ask at this point is whether this more general notion is related to Marczewski-dependence. Below we show that this is indeed the case, although the relationship is unlikely to be of any practical value. 

Recall that the {\em positive diagram} $\DiagpA$ of an algebra $\m{A}$ can be identified with the set of equations $s(a_1,\dots,a_n) \eq t(a_1,\dots,a_n) \in \Eqc(A)$ such that $s^\m{A}(a_1,\dots,a_n) = t^\m{A}(a_1,\dots,a_n)$.

\begin{proposition}
Let $\m{A}$ be any algebra. Then $a_1,\dots,a_n\in A$ are Marczewski-dependent in $\m{A}$ if, and only if, $a_1,\dots,a_n\in\Tmc(A)$ are $\cop{HSP}(\m{A})$-dependent over  $\DiagpA$. 
\end{proposition}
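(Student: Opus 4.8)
The plan is to prove both directions directly from the definitions, working in the variety $\V := \cop{HSP}(\m{A})$ and using as the single key tool the \emph{evaluation homomorphism} $\pi \colon \Tm(A) \to \m{A}$ that sends each variable $a \in A$ to the element $a \in A$. Unwinding the definition of the positive diagram, one sees that $\DiagpA$, viewed as a relation on $\Tmc(A)$, is precisely $\ker(\pi)$; in particular $\DiagpA \subseteq \ker(\pi)$. Since moreover $\m{A} \in \V$, the algebra $\m{A}$ equipped with $\pi$ is a concrete witness validating $\DiagpA$, and it is exactly this pair $(\m{A},\pi)$ that I will feed into the definition of $\V$-consequence from $\DiagpA$.

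For the left-to-right direction, I would suppose that $a_1, \dots, a_n$ are Marczewski-dependent in $\m{A}$ and fix terms $u(\ybar), v(\ybar)$ with $u^\m{A}(\abar) = v^\m{A}(\abar)$ and $u^\m{A} \neq v^\m{A}$. Taking $\eps(\ybar) := u \eq v$, the equation $\eps(a_1, \dots, a_n)$ is exactly $u(\abar) \eq v(\abar)$, which lies in $\DiagpA$ by the first condition, so $\DiagpA \mdl{\V} \eps(a_1, \dots, a_n)$ holds trivially. Moreover $\not\mdl{\V} \eps$, since $u^\m{A} \neq v^\m{A}$ and $\m{A} \in \V$. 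Hence $a_1, \dots, a_n$ are $\V$-dependent over $\DiagpA$.

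For the converse I would start from a witnessing equation $\eps(\ybar)$, written $\eps = (u \eq v)$, with $\DiagpA \mdl{\V} \eps(a_1, \dots, a_n)$ and $\not\mdl{\V} \eps$. Because $\V$ is generated by $\m{A}$, validity of an equation in $\V$ coincides with validity in $\m{A}$, so $\not\mdl{\V} \eps$ yields $u^\m{A} \neq v^\m{A}$. For the remaining condition of Marczewski-dependence I would instantiate the consequence $\DiagpA \mdl{\V} u(\abar) \eq v(\abar)$ at the algebra $\m{A} \in \V$ and the homomorphism $\pi$: since $\DiagpA \subseteq \ker(\pi)$, the defining implication of $\V$-consequence forces $u(\abar) \eq v(\abar) \in \ker(\pi)$, i.e., $u^\m{A}(\abar) = v^\m{A}(\abar)$. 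Together these two facts exhibit $a_1, \dots, a_n$ as Marczewski-dependent in $\m{A}$.

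The argument is essentially a bookkeeping exercise, so there is no deep obstacle; the step I would be most careful about is the dual role played by the elements of $A$ as both variables of $\Tmc(A)$ and elements interpreted by $\pi$, together with the identification $\DiagpA = \ker(\pi)$ and the passage between $\not\mdl{\V}\eps$ and $u^\m{A} \neq v^\m{A}$ via $\V = \cop{HSP}(\m{A})$. (Alternatively, one could route the proof through Proposition~\ref{p:sigmaindpcechar}, first checking that $\F(A)/\cg{\F(A)}(\DiagpA) \cong \m{A}$ and thereby reducing injectivity of the homomorphism $h$ appearing there to injectivity of the map $y_i \mapsto a_i$ on $\F(\ybar)$; but the direct definitional route above seems shortest.)
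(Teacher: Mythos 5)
Your proof is correct and is essentially the paper's own argument: the paper simply records as observations the two equivalences that (i) $\DiagpA \mdl{\cop{HSP}(\m{A})} u(a_1,\dots,a_n) \eq v(a_1,\dots,a_n)$ holds iff this equation lies in $\DiagpA$, iff $u^\m{A}(a_1,\dots,a_n)=v^\m{A}(a_1,\dots,a_n)$, and (ii) $\mdl{\cop{HSP}(\m{A})} u\eq v$ iff $u^\m{A}=v^\m{A}$, and these are exactly the facts you establish. Your write-up just makes explicit the evaluation homomorphism $\pi$ and the identification $\DiagpA = \ker(\pi)$ that underlie the nontrivial direction of (i), which the paper leaves to the reader.
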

\begin{proof}
It suffices to observe that for any set of variables $\ybar = \{y_1,\dots,y_n\}$, equation $u \eq v \in \Eqc(\ybar)$, and  $a_1,\dots,a_n\in A$,
\begin{rlist}
\item[\rm (i)]	$\DiagpA \mdl{\cop{HSP}(\m{A})} u(a_1,\dots,a_n) \eq v(a_1,\dots,a_n)$
			\begin{align*}
			\iff & u(a_1,\dots,a_n) \eq v(a_1,\dots,a_n) \in \DiagpA\\
			\iff & u^\m{A}(a_1,\dots,a_n)=v^\m{A}(a_1,\dots,a_n)
			\end{align*}
\item[\rm (ii)]	$\mdl{\cop{HSP}(\m{A})} u\eq v \iff \m{A}\models u \eq v \iff u^\m{A}=v^\m{A}$. \qedhere
\end{rlist}
\end{proof}

%%%%%%%%%%%%%%%%%%%%%%%%%%%%%%%%%%%%%%%%%%%%%%%

\section{Deciding dependence}\label{sec:deciding}

In~\cite{dJC95} de~Jongh and Chagrova established the decidability of $\lgc{IPC}$-dependence for finitely many formulas (equivalently, the $\cls{HA}$-dependence of finitely many terms) using Pitts' constructive proof of uniform interpolation for $\lgc{IPC}$~\cite{Pit92}. More concretely, they proved that formulas $\f_1,\dots,\f_n$ are $\lgc{IPC}$-independent if, and only if, the right uniform interpolant of $(p_1\lra\f_1) \land \dots \land (p_n\lra\f_n)$ with respect to a new set of variables $\{p_1,\ldots,p_n\}$ is a theorem of $\lgc{IPC}$. In this section, we generalize their approach to an arbitrary variety $\V$ and finite set of equations $\Si$, showing that to check the $\V$-dependence of terms $t_1,\dots,t_n$ over $\Si$, a constructive proof of the weaker property of {\em coherence} in $\V$ --- or, when $\Si=\emptyset$, just coherence for finitely generated $\V$-free algebras --- suffices. 

A finitely presented algebra $\m{A}\in\V$ is said to be {\em coherent} if every finitely generated subalgebra of $\m{A}$ is finitely presented, and a variety $\V$ is called coherent if all of its finitely presented members are coherent. The following result from~\cite{KM18} relates this notion to finitely generated congruences on finitely generated $\V$-free algebras and equational consequence.

\begin{theorem}[{\cite{KM18}*{Theorem~2.3}}]\label{thm:coherence}
Let $\V$ be a variety. The following are equivalent:
\begin{rlist}
\item[\rm (1)]	$\V$ is coherent.
\item[\rm (2)]	For any finite sets $\xbar,\ybar$ and finitely generated congruence $\The$ of $\F(\xbar,\ybar)$, the congruence $\The\cap F(\ybar)^2$ on $\F(\ybar)$ is finitely generated.
\item[\rm (3)]	For any finite sets $\xbar,\ybar$ and finite set of equations $\Ga(\xbar,\ybar)$, there exists a finite set of equations $\Pi(\ybar)$ such that for any equation $\eps(\ybar)$,
\[
\Ga\mdl{\V}\eps\enspace\iff\enspace\Pi\mdl{\V}\eps.
\]
\end{rlist}
\end{theorem}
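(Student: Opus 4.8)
The plan is to treat the equivalence (2)$\,\Leftrightarrow\,$(3) as a routine translation via Lemma~\ref{l:eqconseq}, and to locate the real content in the equivalence (1)$\,\Leftrightarrow\,$(2), which connects the ``external'' notion of coherence to a statement about congruences on finitely generated free algebras. Beyond Lemma~\ref{l:eqconseq}, the one additional tool I will need is the fact that finite presentability does not depend on the choice of finite generating set: if $\m{B}\in\V$ is finitely presented and $\m{B}\cong\F(\ybar)/\Lambda$ with $\ybar$ finite, then $\Lambda$ is a finitely generated congruence. I expect this lemma to be the main obstacle; everything else is bookkeeping with the natural inclusion $\F(\ybar)\hookrightarrow\F(\xbar,\ybar)$ and its restricted quotient maps.

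For (1)$\,\Rightarrow\,$(2), I would start from finite $\xbar,\ybar$ and a finitely generated congruence $\The$ on $\F(\xbar,\ybar)$, and set $\m{A}:=\F(\xbar,\ybar)/\The$, which is finitely presented. Let $q\colon\F(\xbar,\ybar)\to\m{A}$ be the quotient map and let $\m{B}$ be the subalgebra of $\m{A}$ generated by $\{q(y_1),\dots,q(y_n)\}$. Restricting $q$ to the subalgebra $\F(\ybar)$ yields a surjection onto $\m{B}$ with kernel exactly $\The\cap F(\ybar)^2$, so $\m{B}\cong\F(\ybar)/(\The\cap F(\ybar)^2)$. Since $\m{B}$ is a finitely generated subalgebra of the finitely presented algebra $\m{A}$, coherence makes $\m{B}$ finitely presented, and the presentation-independence lemma then forces $\The\cap F(\ybar)^2$ to be finitely generated, which is (2). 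For the converse (2)$\,\Rightarrow\,$(1), I would take a finitely presented $\m{A}=\F(\zbar)/\Psi$ with $\zbar$ finite and $\Psi$ finitely generated, together with a finitely generated subalgebra $\m{B}\subseteq\m{A}$ generated by the images $b_1,\dots,b_n$ of terms $s_1(\zbar),\dots,s_n(\zbar)$. Introducing fresh variables $\ybar=\{y_1,\dots,y_n\}$ and the finitely generated congruence $\The:=\cg{\F(\zbar,\ybar)}(\Psi\cup\{\lan y_i,s_i\ran : i\in[n]\})$ on $\F(\zbar,\ybar)$, the same kernel computation identifies $\m{B}$ with $\F(\ybar)/(\The\cap F(\ybar)^2)$; condition (2) says the latter congruence is finitely generated, so $\m{B}$ is finitely presented, as required.

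The crux is therefore the presentation-independence lemma, which I would prove by a back-and-forth argument exploiting freeness. Writing the given finite presentation as a surjection $p\colon\F(\wbar)\to\m{B}$ with $\ker p=\cg{\F(\wbar)}(\Psi_0)$ for finite $\Psi_0$, and the other as $q\colon\F(\ybar)\to\m{B}$ with $\ker q=\Lambda$, I would lift generators in both directions: choose homomorphisms $\alpha\colon\F(\ybar)\to\F(\wbar)$ and $\beta\colon\F(\wbar)\to\F(\ybar)$ with $p\circ\alpha=q$ and $q\circ\beta=p$, which exist because $\F(\ybar)$ and $\F(\wbar)$ are free and the images of the generators lie in $\m{B}$. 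The claim is then that
\[
\Lambda=\cg{\F(\ybar)}\bigl(\beta(\Psi_0)\cup\{\lan y_j,\beta(\alpha(y_j))\ran : j\in[n]\}\bigr)=:\Lambda_0 ,
\]
a finitely generated congruence. The inclusion $\Lambda_0\subseteq\Lambda$ is immediate from $p\circ\alpha=q$, $q\circ\beta=p$, and $\Psi_0\subseteq\ker p$. For $\Lambda\subseteq\Lambda_0$, given $\lan s,t\ran\in\Lambda$ I would first use $\lan y_j,\beta(\alpha(y_j))\ran\in\Lambda_0$ and the congruence property to obtain $\lan s,\beta(\alpha(s))\ran,\lan t,\beta(\alpha(t))\ran\in\Lambda_0$, reducing the problem to $\lan\beta(\alpha(s)),\beta(\alpha(t))\ran\in\Lambda_0$; then from $q(s)=q(t)$ and $q=p\circ\alpha$ I get $\lan\alpha(s),\alpha(t)\ran\in\ker p=\cg{\F(\wbar)}(\Psi_0)$, and applying $\beta$—which maps $\cg{\F(\wbar)}(\Psi_0)$ into $\cg{\F(\ybar)}(\beta(\Psi_0))\subseteq\Lambda_0$—yields the required membership. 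Transitivity then gives $\lan s,t\ran\in\Lambda_0$.

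Finally, for (2)$\,\Leftrightarrow\,$(3) I would simply read condition (2) through Lemma~\ref{l:eqconseq}. Given finite $\Ga(\xbar,\ybar)$, set $\The:=\cg{\F(\xbar,\ybar)}(\Ga)$; for any $\eps(\ybar)$, Lemma~\ref{l:eqconseq} gives $\Ga\mdl{\V}\eps\iff\eps\in\The$, and since $\eps\in F(\ybar)^2$ this is equivalent to $\eps\in\The\cap F(\ybar)^2$. If (2) holds, writing $\The\cap F(\ybar)^2=\cg{\F(\ybar)}(\Pi)$ for finite $\Pi(\ybar)$ and applying Lemma~\ref{l:eqconseq} again gives $\eps\in\cg{\F(\ybar)}(\Pi)\iff\Pi\mdl{\V}\eps$, establishing (3). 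Conversely, given (3), the finite set $\Pi$ it provides satisfies $\cg{\F(\ybar)}(\Pi)=\The\cap F(\ybar)^2$ by the same chain of equivalences, witnessing that $\The\cap F(\ybar)^2$ is finitely generated, which is (2).
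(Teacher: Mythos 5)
Your argument is correct, and it is worth noting up front that the paper itself offers no proof to compare against: Theorem~\ref{thm:coherence} is imported verbatim from \cite{KM18}*{Theorem~2.3}. The natural in-paper comparison is the (unnumbered) lemma on free coherence in Section~\ref{sec:deciding}, whose proof follows exactly your route: identify the relevant subalgebra with $\F(\ybar)/(\The\cap F(\ybar)^2)$ via the homomorphism theorem, invoke the Fact (\cite{KM18}*{Lemma~2.2}) that a finitely presented algebra presented as $\F(\zbar)/\The$ over a finite $\zbar$ forces $\The$ to be finitely generated, and dispatch (2)$\,\Leftrightarrow\,$(3) through Lemma~\ref{l:eqconseq}. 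Where you go beyond the paper is in proving that Fact rather than citing it: your lifting argument --- choosing $\alpha,\beta$ with $p\circ\alpha=q$ and $q\circ\beta=p$, and showing $\Lambda=\Lambda_0$ via the substitution trick together with $\beta[\cg{\F(\wbar)}(\Psi_0)]\subseteq\cg{\F(\ybar)}(\beta[\Psi_0])$ --- is sound, and this is essentially the argument behind the cited lemma. Two small points to tighten in (2)$\,\Rightarrow\,$(1): first, as written, $\The:=\cg{\F(\zbar,\ybar)}(\Psi\cup\{\lan y_i,s_i\ran : i\in[n]\})$ is generated by an infinite set (the congruence $\Psi$), so to call it finitely generated you should replace $\Psi$ by a finite generating set $\Psi_0$ and observe that $\Psi\subseteq\cg{\F(\zbar,\ybar)}(\Psi_0)$, since the restriction of the latter to $F(\zbar)^2$ is a congruence of $\F(\zbar)$ containing $\Psi_0$; second, ``the same kernel computation'' conceals a real (if routine) step, because the map $g\colon\F(\zbar,\ybar)\to\m{A}$ sending $z\mapsto[z]_\Psi$ and $y_i\mapsto b_i$ is not a canonical quotient map, so the equality $\ker(g)\cap F(\ybar)^2=\The\cap F(\ybar)^2$ must be checked --- the inclusion $\supseteq$ is clear, and $\subseteq$ uses the same substitution trick ($\lan u,u(\zbar,\overline{s})\ran\in\The$) that you deploy in your proof of the Fact. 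Both gaps are easily filled, so the proof stands.
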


\begin{remark}
Condition (3) is closely related to the property of {\em right uniform deductive interpolation}, which is obtained by replacing ``any equation $\eps(\ybar)$'' with ``any equation $\eps(\ybar,\zbar)$''. Indeed, a variety has right uniform deductive interpolation if, and only if, it is coherent and has deductive interpolation. Let us note also that deductive interpolation (obtained from right uniform deductive interpolation by dropping the requirement that $\Pi(\ybar)$ be finite) is equivalent to the amalgamation property in the presence of the congruence extension property. We refer to~\cites{MMT14,vGMT17,KM18} for further details and discussion of these relationships.
\end{remark}

Let us call the problem of finding for any finite sets $\xbar,\ybar$ and finite set of equations $\Ga(\xbar,\ybar)$, a finite set of equations $\Pi(\ybar)$ satisfying the equivalence in condition (3) of Theorem~\ref{thm:coherence}, the {\em coherence problem} for $\V$. Note that this is equivalent to the problem of finding a finite presentation for a finitely generated subalgebra of a finitely presented algebra of $\V$.

\begin{proposition}\label{prop:decidable}
Let $\V$ be a variety and let $\Si$ be any finite set of equations. If the coherence problem for $\V$ and the equational theory of $\V$ are both decidable, then $\V$-dependence over $\Si$ is decidable.
\end{proposition}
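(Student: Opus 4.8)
The plan is to reduce $\V$-independence over $\Si$ to finitely many applications of the equational-theory decision procedure, using the decidability of the coherence problem to replace an infinite condition by a finite, checkable one.

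First I would invoke Remark~\ref{rem:independenceoversigma}: setting $\Ga := \Si \cup \{y_1 \eq t_1,\dots,y_n \eq t_n\}$ and $\Pi := \{\eps \in \Eqc(\ybar) \mid \Ga \mdl{\V} \eps\}$, the terms $t_1,\dots,t_n$ are $\V$-independent over $\Si$ if, and only if, $\mdl{\V} \Pi$. Since $\Si$ is finite and each $t_i$ is a term over $\xbar$, the set $\Ga(\xbar,\ybar)$ is finite. The obstacle is that $\Pi$ is in general infinite, so checking $\mdl{\V} \Pi$ directly is not obviously effective.

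This is exactly where the coherence problem enters. By its decidability for $\V$, from the finite input $\Ga(\xbar,\ybar)$ I can compute a finite set $\Pi'(\ybar)$ such that $\Ga \mdl{\V} \eps \iff \Pi' \mdl{\V} \eps$ for every equation $\eps(\ybar)$. I would then verify the equivalence $\mdl{\V} \Pi \iff \mdl{\V} \Pi'$. The forward direction is immediate once one notes $\Pi' \subseteq \Pi$: each $\de \in \Pi'$ satisfies $\Pi' \mdl{\V} \de$, hence $\Ga \mdl{\V} \de$ by the coherence equivalence, so $\de \in \Pi$. For the converse, assuming $\mdl{\V} \Pi'$, take any $\eps \in \Pi$; then $\Ga \mdl{\V} \eps$, hence $\Pi' \mdl{\V} \eps$, and combining this with $\mdl{\V} \Pi'$ via transitivity of equational consequence yields $\mdl{\V} \eps$. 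Thus $\mdl{\V} \Pi$.

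Finally, since $\Pi'$ is finite and the equational theory of $\V$ is decidable, checking $\mdl{\V} \Pi'$ reduces to deciding $\mdl{\V} \de$ for each of the finitely many $\de \in \Pi'$, which is effective. Combining the steps, $t_1,\dots,t_n$ are $\V$-independent over $\Si$ if, and only if, $\mdl{\V} \Pi'$, a decidable condition; hence $\V$-dependence over $\Si$, being its complement, is decidable as well. The only genuinely non-routine ingredient is the passage from the infinite $\Pi$ to the finite $\Pi'$, and this is supplied precisely by the decidability of the coherence problem; everything else is bookkeeping with transitivity of $\mdl{\V}$.
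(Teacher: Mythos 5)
Your proposal is correct and follows essentially the same route as the paper: invoke Remark~\ref{rem:independenceoversigma}, use decidability of the coherence problem to compute a finite set $\Pi'(\ybar)$ equivalent to $\Ga$ over $\Eqc(\ybar)$, and reduce independence to the decidable condition $\mdl{\V}\Pi'$. The only difference is that you spell out the transitivity/cut arguments establishing $\mdl{\V}\Pi \iff \mdl{\V}\Pi'$, which the paper leaves implicit.
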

\begin{proof}
Given $t_1,\dots,t_n \in \Tmc(\xbar)$, define $\Ga := \Si\cup\{y_1\eq t_1,\dots,y_n\eq t_n\}$. It follows from Remark~\ref{rem:independenceoversigma} that $t_1,\dots,t_n$ are $\V$-independent over $\Si$ if, and only if,  for any equation $\eps(\ybar)$,
\[
\Ga\mdl{\V}\eps\enspace\iff\enspace\:\mdl{\V}\eps.
\]
However, by assumption, a finite set of equations $\Pi(\ybar)$ can be constructed such that for any equation $\eps(\ybar)$,
\[
\Ga\mdl{\V}\eps\enspace\iff\enspace\Pi\mdl{\V}\eps.
\]
It follows that $t_1,\dots,t_n\in \Tmc(\xbar)$ are $\V$-independent over $\Si$ if, and only if, $\mdl{\V}\Pi$, which, by assumption, is decidable. 
\end{proof}

The coherence problem is clearly decidable for any locally finite variety. Also, as mentioned above, Pitts' constructive proof of uniform  interpolation for $\lgc{IPC}$ provides an algorithm that decides the coherence problem for the variety $\cls{HA}$ of Heyting algebras~\cite{Pit92} and hence also an algorithm for deciding  $\cls{HA}$-dependence over any finite set of equations. 

Pitts-style proofs of uniform interpolation have been obtained for various intermediate, modal, and substructural logics (see in particular~\cites{Vis96,Sha93,Bil07,ADO14}), but typically establish an implication-based uniform interpolation property that does not imply coherence. However, for the modal logics $\lgc{GL}$ and $\lgc{S4Grz}$, the proof-theoretic proofs of uniform interpolation by Bilkov{\'a}~\cite{Bil07} (originally proved semantically by Shavrukov~\cite{Sha93} and Visser~\cite{Vis96}, respectively) provide constructive proofs of coherence for the associated varieties and hence also decidability of dependence over finite sets of equations.

\begin{example}[Abelian $\ell$-groups]
An {\em abelian $\ell$-group} is an algebraic structure $\lan L,\mt,\jn,+,-,0 \ran$ such that $\lan L,\mt,\jn\ran$ is a lattice with order $a\le b:\Leftrightarrow a\mt b =a$, $\lan L,+,-,0 \ran$ is an abelian group, and $a\leq b$ implies $a+c\leq b+c$ for all $a,b,c\in L$. They form a variety $\cls{LA}$ that is generated as a quasivariety by $\m{R}=\langle \mathbb{R},\land,\lor,+,-,0 \rangle$ (cf.~\cite{AF88}*{Lemma~6.2}) and has a decidable equational theory. In~\cite{Mun17} it is proved that checking whether the subalgebra generated by $n$ elements of a finitely generated free abelian $\ell$-group is isomorphic to the $n$-generated free abelian $\ell$-group is decidable, and hence, although this is not explicitly stated, that $\cls{LA}$-dependence is decidable. However, a stronger version of this result, the decidability of $\cls{LA}$-dependence over a finite set of equations, follows already from a (quite easy) constructive proof of coherence given implicitly in~\cite{MMT14}. Note first that it suffices to show that for any finite set $\ybar$ and finite set of equations $\Ga(x,\ybar)$, there exists a finite set of equations $\Pi(\ybar)$ such that for any equation $\eps(\ybar)$,
\[
\Ga\mdl{\m{R}}\eps\iff\Pi\mdl{\m{R}}\eps.
\]
Moreover, we may assume (with a little work, omitted here) that $\Ga$ consists of inequations $0 \le s_i + nx$ ($i \in I$), $0 \le t_j - nx$ ($j \in J$), and $0 \le u_k$ ($k \in K$) for some $n \ge 1$, finite sets $I,J,K$, and terms $s_i(\ybar),t_j(\ybar),u_k(\ybar)$. The desired set $\Pi(\ybar)$ is then $\{0 \le s_i + t_j \mid i \in I; j \in J\}\cup\{0 \le u_k\mid k \in K\}$. 
\end{example}

\begin{example}[MV-algebras]
The variety $\cls{MV}$ of {\em MV-algebras} consists of algebraic structures $\lan M,\oplus,\lnot,0\ran$ satisfying the equations
\[
\begin{array}{rlrl}
{\rm (M1)} &  x\oplus(y\oplus z)\eq(x\oplus y)\oplus z &  {\rm (M4)} &  \lnot\lnot x\eq x\\
{\rm (M2)} &  x\oplus y\eq y\oplus x  & {\rm (M5)}  &  x\oplus\lnot0 \eq\lnot 0\\
{\rm (M3)} &  x\oplus0\eq x & {\rm (M6)} &  \lnot(\lnot x\oplus y)\oplus y\eq\lnot(\lnot y\oplus x)\oplus x.
\end{array}
\]
It is generated as a quasivariety by $\m{[0,1]} = \langle [0,1],\oplus,\lnot,0 \rangle$, where $a \oplus b = \min(1,a+b)$ and $\lnot a = 1-a$, with defined operations $1 := \lnot 0$, $a \odot b := \lnot (\lnot a \oplus \lnot b)$, $a \lor b := \lnot (\lnot a \oplus b) \oplus b$, and $a \land b := \lnot (\lnot a \lor \lnot b)$~\cite{diN91}. Let $\m{R}^u$ be the unital abelian $\ell$-group consisting of $\m{R}$ with an additional constant $1$.  It follows from McNaughton's representation theorem (or see~\cite{MMT14}*{Sec.~6} for a direct proof) that (i) the interpretation of any term $s$ in $\m{[0,1]}$ is equivalent on $[0,1]$ to the interpretation of some term $(t \land 0) \lor 1$ in $\m{R}^u$, and, conversely, (ii) the interpretation of any term $(t \land 0) \lor 1$ in $\m{R}^u$ is equivalent on $[0,1]$ to the interpretation of some term $s$ in $\m{[0,1]}$. Coherence for $\cls{MV}$ may then be established constructively as in the case of abelian $\ell$-groups described in the previous example. Since $\cls{MV}$ also has a decidable equational theory, $\cls{MV}$-dependence over a finite set of equations is decidable.  
\end{example}

As shown in~\cites{KM18,KM18a}, coherence for a non-locally finite variety is a rather exceptional property. In particular, the varieties of lattices, semigroups, and groups, as well as broad families of varieties of modal algebras and residuated lattices are not coherent. However, for checking $\V$-dependence (i.e., over the empty set of equations), it is not necessary to have an algorithm that decides the full coherence problem; it suffices to consider the coherence of finitely generated $\V$-free algebras.

\begin{lemma}
Let $\V$ be a variety. The following are equivalent for any finite set $\xbar$:
\begin{rlist}
\item[\rm (1)]	$\F(\xbar)$ is coherent.
\item[\rm (2)]	For any\, $t_1,\dots,t_n \in \Tmc(\xbar)$ and\, $\ybar = \{y_1,\dots,y_n\}$, the congruence $\cg{\F(\xbar,\ybar)}(\lan y_1, t_1\ran,\dots,\lan y_n,t_n\ran) \cap F(\ybar)^2$ is finitely generated.
\item[\rm (3)]	For any\, $t_1,\dots,t_n \in \Tmc(\xbar)$ and\, $\ybar = \{y_1,\dots,y_n\}$, there exists a finite set of equations $\Pi(\ybar)$ such that for any equation $\eps(\ybar)$,
\[
\{y_1\eq t_1,\dots,y_n\eq t_n\}\mdl{\V}\eps\enspace\iff\enspace\Pi\mdl{\V}\eps.
\]
\end{rlist}
\end{lemma}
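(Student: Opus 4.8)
The plan is to prove the cycle of equivalences by splitting it into a ``bookkeeping'' equivalence (2)\,$\iff$\,(3), which merely translates finite generation of a congruence into the existence of a finite equational set, and a ``structural'' equivalence (1)\,$\iff$\,(2), which reinterprets that finite generation as finite presentability of a finitely generated subalgebra of $\F(\xbar)$. Throughout I will use that $\F(\ybar)$ embeds as a subalgebra of $\F(\xbar,\ybar)$, so that, writing
\[
\The_0 := \cg{\F(\xbar,\ybar)}(\lan y_1,t_1\ran,\dots,\lan y_n,t_n\ran) \cap F(\ybar)^2,
\]
$\The_0$ is a congruence of $\F(\ybar)$, and condition (2) is precisely the assertion that $\The_0$ is finitely generated.

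For (2)\,$\iff$\,(3), the key observation --- following the proof of Theorem~\ref{thm:coherence} in~\cite{KM18} together with Remark~\ref{rem:independence} --- is that for $\Ga := \{y_1\eq t_1,\dots,y_n\eq t_n\}$ and any equation $\eps = (u\eq v)\in\Eqc(\ybar)$, Lemma~\ref{l:eqconseq} applied in $\F(\xbar,\ybar)$ yields
\[
\Ga\mdl{\V}\eps \iff \lan u,v\ran\in\cg{\F(\xbar,\ybar)}(\Ga) \iff \lan u,v\ran\in\The_0,
\]
the last step using that $u,v\in F(\ybar)$. Hence a finite $\Pi(\ybar)$ witnesses (3) if, and only if, for every $\eps=(u\eq v)\in\Eqc(\ybar)$ we have $\lan u,v\ran\in\The_0 \iff \Pi\mdl{\V}\eps$, which by Lemma~\ref{l:eqconseq} in $\F(\ybar)$ is in turn equivalent to $\cg{\F(\ybar)}(\Pi)=\The_0$. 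Thus the existence of such a finite $\Pi$ is exactly the finite generation of $\The_0$, giving both directions simultaneously.

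For (1)\,$\iff$\,(2), I will use that every finitely generated subalgebra $\m{B}$ of $\F(\xbar)$ arises as the image of the homomorphism $h\colon\F(\ybar)\to\F(\xbar)$, $y_i\mapsto t_i$, for a suitable finite list $t_1,\dots,t_n\in\Tmc(\xbar)$ of generators, so that $\m{B}\cong\F(\ybar)/\ker(h)$. Factoring $h = g\circ f$ exactly as in the proof of Proposition~\ref{p:indpcechar} shows $\ker(h)=\The_0$. The direction (2)\,$\Rightarrow$\,(1) is then immediate from the definition of finite presentability: if $\The_0$ is finitely generated for every choice of $t_1,\dots,t_n$, then every finitely generated subalgebra of the finitely presented algebra $\F(\xbar)$ is presented by a finitely generated congruence on an $n$-generated free algebra, hence finitely presented, so $\F(\xbar)$ is coherent.

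The converse (1)\,$\Rightarrow$\,(2) is where the genuine work lies, and I expect it to be the main obstacle. Here I must pass from the \emph{abstract} finite presentability of $\m{B}$ --- coming from some presentation whose generators need not be $t_1,\dots,t_n$ --- to finite generation of the \emph{specific} kernel $\ker(h)=\The_0$. This is the standard universal-algebraic fact that finite presentability is independent of the chosen finite generating set: given a finite presentation $\m{B}\cong\F(\zbar)/\cg{\F(\zbar)}(E)$ with $\zbar$ finite and $E$ finite, one exploits the projectivity of the free algebras $\F(\ybar)$ and $\F(\zbar)$ to lift the two surjections onto $\m{B}$ to homomorphisms between $\F(\ybar)$ and $\F(\zbar)$ over $\m{B}$, and then exhibits a finite generating set for $\ker(h)$ assembled from $E$ together with the finitely many pairs recording how each generating set is expressed in terms of the other. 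Verifying that this finite set indeed generates $\ker(h)$ is the only technical step; the remaining claims follow routinely from Lemma~\ref{l:eqconseq} and the identification $\ker(h)=\The_0$.
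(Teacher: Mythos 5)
Your proposal is correct and follows essentially the same route as the paper: both split the statement into (2)\,$\iff$\,(3) via Lemma~\ref{l:eqconseq} and (1)\,$\iff$\,(2) via the identification $\ker(h)=\The_0$ together with the homomorphism theorem, treating (2)\,$\Rightarrow$\,(1) as immediate. The only difference is that where you sketch the standard projectivity/lifting argument showing that finite presentability does not depend on the chosen finite generating set, the paper simply cites this as a known fact (\cite{KM18}*{Lemma~2.2}).
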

\begin{proof} 
For the equivalence of (1) and (2), consider any $t_1,\dots,t_n\in \Tmc(\xbar)$ and\, $\ybar = \{y_1,\dots,y_n\}$. Let  $h \colon \F(\ybar) \to \F(\xbar)$ be the homomorphism mapping $y_i$ to $t_i$ for each $i\in[n]$. Then $\ker(h) = \cg{\F(\xbar,\ybar)}(\lan y_1, t_1\ran,\dots,\lan y_n,t_n\ran) \cap F(\ybar)^2$ and, by the homomorphism theorem  (see~\cite{BS81}), the quotient $\F(\ybar) / \ker(h)$ is isomorphic to the subalgebra $\m{A}$ of $\F(\xbar)$ generated by $\{t_1,\dots,t_n\}$. We now recall the following:
\begin{fact}[\cite{KM18}*{Lemma~2.2}]
If $\m{B}\in\V$ is finitely presented and isomorphic to a quotient $\F(\zbar)/\The$ for some finite set $\zbar$, then $\The$ is finitely generated.
\end{fact}
Hence, if $\F(\xbar)$ is coherent, $\m{A}$ is finitely presented and, by the fact stated above, $\cg{\F(\xbar,\ybar)}(\lan y_1, t_1\ran,\dots,\lan y_n,t_n\ran) \cap F(\ybar)^2$ is finitely generated. Conversely, assuming (2), if $\m{A}$ is a subalgebra of $\F(\xbar)$ generated by $\{t_1,\dots,t_n\}$, then the congruence $\cg{\F(\xbar,\ybar)}(\lan y_1, t_1\ran,\dots,\lan y_n,t_n\ran) \cap F(\ybar)^2$ is finitely generated and $\m{A}$ is finitely presented, so $\F(\xbar)$ is coherent. The equivalence of (2) and (3) follows directly from Lemma~\ref{l:eqconseq}.
\end{proof}

Let us therefore call the problem of finding for any $t_1,\dots,t_n \in \Tmc(\xbar)$ and $\ybar = \{y_1,\dots,y_n\}$, a finite set of equations $\Pi(\ybar)$ satisfying the equivalence in condition (3), the {\em free coherence problem} for $\V$. (Note that this is equivalent to finding a finite presentation for a finitely generated subalgebra of a finitely generated $\V$-free algebra.) It follows directly from the proof of Proposition~\ref{prop:decidable} that if the free coherence problem for $\V$ and the equational theory of $\V$ are decidable, then $\V$-dependence is decidable.

\begin{example}[Groups]
The variety $\cls{G}\mathit{rp}$ of {\em groups} is not coherent; e.g., the wreath product $\mathbb{Z}\,{\rm wr}\,\mathbb{Z}$ is a finitely generated subgroup of a finitely presented group that is not finitely presented~\cite{Cle06}. However, by the Nielsen-Schreier theorem, every finitely generated subgroup of a finitely generated free group is again a finitely generated free group, so finitely generated free groups are coherent. Moreover, Nielsen's proof in~\cite{Nie21} also determines the rank of a finitely generated subgroup of a free group, so the free coherence problem for $\cls{G}\mathit{rp}$ is decidable. Hence also $\cls{G}\mathit{rp}$-dependence is decidable.
\end{example}

It may not be the case that all finitely generated free algebras of a variety $\V$ are coherent. However, assuming that the equational theory of $\V$ is decidable, $\V$-dependence is decidable (again considering the proof of Proposition~\ref{prop:decidable}) if there exists an algorithm to check for $t_1,\dots,t_n \in \Tmc(\xbar)$ and $\ybar = \{y_1,\dots,y_n\}$ whether for any equation $\eps(\ybar)$,
\[
\{y_1\eq t_1,\dots,y_n\eq t_n\}\mdl{\V}\eps\enspace\iff\enspace\mdl{\V}\eps.
\]

\begin{example}[Semigroups]
The variety $\cls{SG}$ of {\em semigroups} is not coherent. Indeed, even the $n$-generated free semigroups are not coherent for $n\ge 3$; e.g., the subsemigroup of the free semigroup $\F(x,y,z)$ generated by $yx$, $yx^2$, $x^3$, $xz$, and $x^2z$ is not finitely presented~\cite{CFR96}. However, the $\cls{SG}$-dependence problem corresponds precisely to the problem of checking whether a finite subset $X$ of a finitely generated free semigroup $\F(\xbar)$ is a {\em code}, solved by the famous Sardinas-Patterson algorithm~\cite{SP53}. For finite subsets $Y,Z\subseteq F(\xbar)$, let $Y^{-1}Z := \{t \in F(\xbar) \mid st \in Z;\, s\in Y\}$. The algorithm starts with $U_0 := X$ and continues iteratively with $U_{n+1}:= X^{-1}U_n \cup U^{-1}_{n}X$ for each $n\in\mathbb{N}$. It can be proved that $X$ is {\em not} a code if, and only if, $X\cap U_n \neq\emptyset$ for some $n \ge 1$, and, since there can only be finitely many different $U_n$'s, the algorithm is terminating.
\end{example}

\begin{example}[Modal algebras]
{\em Modal algebras} --- Boolean algebras with an additional unary operation $\Box$ satisfying $\Box (x\land y) \eq \Box x \land \Box y$ and $\Box 1 =~1$ --- form a variety $\cls{MA}$ that provides algebraic semantics for the modal logic~$\lgc{K}$. This variety is not coherent~\cite{KM18}, and the coherence of finitely generated free modal algebras seems to be an open problem. Nevertheless, $\cls{MA}$-dependence can be decided using a bisimulation-based method given in~\cite{LW11} for calculating existing right uniform deductive interpolants for the description logic $\lgc{ALC}$. It is well known that $\lgc{ALC}$ restricted to a single role may be viewed as a syntactic variant of the logic $\lgc{K}$. Hence to decide $\cls{MA}$-dependence, it suffices to observe that modal formulas $\f_1,\dots,\f_n$ (corresponding to terms) are $\cls{MA}$-independent if, and only if, the formula $(p_1\lra\f_1) \land \dots \land (p_n\lra\f_n)$ has a right uniform deductive interpolant with respect to the new variables $p_1,\dots,p_n$ that is a theorem of $\lgc{K}$. This latter claim can be checked using the algorithm described in~\cite{LW11}.
\end{example}

%%%%%%%%%%%%%%%%%%%%%%%%%%%%%%%%%%%%%%%%%%%%%%%

\section{Dependence and minimal provability}\label{sec:minimal}

In~\cite{deJ82} it is shown that a single formula $\f$ is $\lgc{IPC}$-dependent if, and only if, either $\der{\lgc{IPC}}\lnot\lnot\f\to\f$ or $\der{\lgc{IPC}}\lnot\lnot\f$, and in~\cite{dJC95} a family $(\ps_i(p_1,p_2))_{i\in\N}$ of formulas is given such that two formulas $\f_1,\f_2$ are $\lgc{IPC}$-dependent if, and only if, $\der{\lgc{IPC}} \ps_i(\f_1,\f_2)$ for some $i\in\N$. In both cases, no proper subset of the given set of formulas suffices for checking $\lgc{IPC}$-dependence. In this section, we provide a general framework for describing such ``minimal provability'' results, obtaining a complete description and decidability of dependence for the variety of lattices.

Let $\V$ be a variety. Given any sets of equations $\Ga,\De$ with variables in a set $\ybar$, we write $\Ga\adm{\V}\De$ to denote that for any substitution (i.e., homomorphism) $\si\colon\Tm(\ybar)\to\Tm(\omega)$ extended to $\Eqc(\ybar)$ by $\si(s\eq t) = \si(s)\eq\si(t)$,
\[
\mdl{\V}\si[\Ga] \enspace\Longrightarrow \quad \mdl{\V}\si(\de) \,\text{ for some }\de\in\De.
\]
It is not hard to check (see, e.g.,~\cite{Iem16}) that $\adm{\V}$ is a (finitary) multiple-conclusion consequence relation over $\Eqc(\ybar)$; that is, for any equation $\eps$ and finite sets of equations $\Ga,\Ga',\De,\De'$ with variables in $\ybar$,
\begin{rlist}
\item[{\rm (i)}] 	$\{\eps\}\adm{\V}\{\eps\}$
\item[{\rm (ii)}] 	if $\Ga\adm{\V}\De$, then  $\Ga\cup\Ga'\adm{\V}\De\cup\De'$
\item[{\rm (iii)}] 	if $\Ga\cup\{\eps\}\adm{\V}\De$ and $\Ga'\adm{\V}\{\eps\}\cup\De'$, then $\Ga\cup\Ga'\adm{\V}\De\cup\De'$
\item[{\rm (iv)}] 	if $\Ga\adm{\V}\De$, then $\si[\Ga]\adm{\V}\si[\De]$ for any substitution $\si\colon\Tm(\ybar)\to\Tm(\ybar)$.
\end{rlist}
It is also easy to see that for any set of equations $\Ga\cup\{\eps\}$  with variables in $\ybar$,
\[
\Ga\mdl{\V}\eps \enspace\Longrightarrow \enspace\Ga\adm{\V}\{\eps\}.
\]

\begin{remark}
The relation $\adm{\V}$ describes the {\em admissibility} of universal formulas (or multiple-conclusion rules) in the variety $\V$. Indeed, for finite sets of equations $\Ga,\De$, it is the case that $\Ga\adm{\V}\De$  is equivalent to the validity of the implication from the conjunction of the equations in $\Ga$ to the disjunction of the equations in $\De$ in the free algebra $\F(\omega)$  (see, e.g.,~\cite{CM15}).
\end{remark} 

Let us call $\De\subseteq\Eqc(\ybar)$ {\em $\V$-refuting} for a set $\ybar$ if for any equation $\eps(\ybar)$,
\[
\notmdl{\V}\eps \:\iff\: \{\eps\}\adm{\V}\De,
\]
and {\em minimal} if, additionally, no proper subset of $\De$ is $\V$-refuting for $\ybar$.

\begin{lemma}
Let $\V$ be a variety and let $\De(\ybar)$ be a $\V$-refuting set of equations for $\ybar = \{y_1,\dots,y_n\}$. Then $t_1,\dots,t_n \in \Tmc(\xbar)$ are $\V$-dependent if, and only if, $\mdl{\V} \de(t_1,\dots,t_n)$ for some $\de\in\De$.
\end{lemma}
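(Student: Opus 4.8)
The plan is to unwind the two definitions involved---$\V$-dependence and the $\V$-refuting property of $\De$---and to link them through the admissibility relation $\adm{\V}$. The one preliminary observation I would record first is that every $\de\in\De$ is itself non-valid, i.e.\ $\notmdl{\V}\de$. This follows directly from the definition of $\adm{\V}$: since $\de\in\De$, for any substitution $\si$ with $\mdl{\V}\si(\de)$ the required conclusion is witnessed by $\de$ itself, so $\{\de\}\adm{\V}\De$; applying the $\V$-refuting equivalence with $\eps:=\de$ then yields $\notmdl{\V}\de$.

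For the right-to-left direction, suppose $\mdl{\V}\de(t_1,\dots,t_n)$ for some $\de\in\De$. I would take the witnessing equation for dependence to be $\de$ itself: by hypothesis $\mdl{\V}\de(t_1,\dots,t_n)$, while $\notmdl{\V}\de$ by the preliminary observation, so $t_1,\dots,t_n$ are $\V$-dependent by definition.

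For the left-to-right direction, suppose $t_1,\dots,t_n$ are $\V$-dependent and fix an equation $\eps(\ybar)$ with $\mdl{\V}\eps(t_1,\dots,t_n)$ and $\notmdl{\V}\eps$. From $\notmdl{\V}\eps$ and the $\V$-refuting property I obtain $\{\eps\}\adm{\V}\De$. The crucial step is then to instantiate the universally quantified substitution in the definition of $\adm{\V}$ at the specific substitution $\si\colon\Tm(\ybar)\to\Tm(\omega)$ determined by $y_i\mapsto t_i$ for $i\in[n]$ (a legitimate substitution, since the variables of $\xbar$ lie in the ambient variable set $\omega$). For this $\si$ we have $\si(\eps)=\eps(t_1,\dots,t_n)$, so the antecedent $\mdl{\V}\si(\eps)$ holds; hence $\{\eps\}\adm{\V}\De$ forces $\mdl{\V}\si(\de)$, i.e.\ $\mdl{\V}\de(t_1,\dots,t_n)$, for some $\de\in\De$, which is exactly the desired conclusion.

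Since the whole argument is a definitional unfolding, I do not anticipate a genuine obstacle. The only points demanding care are the choice of the evaluation substitution $y_i\mapsto t_i$ in the forward direction, and the observation that each refuting equation is itself non-valid, which follows immediately from $\de\in\De$ together with the $\V$-refuting equivalence.
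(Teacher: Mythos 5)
Your proof is correct and follows essentially the same route as the paper's: the forward direction uses the refuting property to get $\{\eps\}\adm{\V}\De$ and then instantiates the substitution at $y_i\mapsto t_i$, while the converse uses $\{\de\}\adm{\V}\De$ together with the refuting equivalence to conclude $\notmdl{\V}\de$. You merely make explicit two steps the paper leaves implicit (the choice of evaluation substitution and the non-validity of each $\de\in\De$), which is a welcome clarification but not a different argument.
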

\begin{proof} 
Suppose first that $t_1,\dots,t_n$ are $\V$-dependent. Then $\mdl{\V} \eps(t_1,\dots,t_n)$ and $\not\mdl{\V}\eps$ for some equation $\eps(\ybar)$. Since $\De$ is a $\V$-refuting set of equations for $\ybar$, also $\{\eps\}\adm{\V}\De$. Hence $\mdl{\V} \de(t_1,\dots,t_n)$ for some $\de\in\De$. For the converse, suppose that $\mdl{\V} \de(t_1,\dots,t_n)$ for some $\de\in\De$. Clearly $\{\de\}\adm{\V}\De$ and hence $\not\mdl{\V}\de$, so $t_1,\dots,t_n$ are $\V$-dependent.
\end{proof}

If $\V$ has a decidable equational theory and a finite $\V$-refuting set of equations can be found for $\ybar = \{y_1,\dots,y_n\}$ for each $n\in\N$, then $\V$-dependence is clearly decidable. In the case where $\V$ is locally finite, a finite $\V$-refuting set of equations $\De_n$ can be obtained  for each $n\in\N$ and $\ybar=\{y_1,\dots,y_n\}$ by considering all pairs of distinct elements $s,t$ from the finite free algebra $\F(\ybar)$. A minimal $\V$-refuting set for $\ybar$ can then be obtained by iteratively removing any $\de\in\De_n$ such that $\{\de\}\adm{\V}\De_n\mathop{\setminus}\{\de\}$.

\begin{example}\label{exam:dlat}
Let us illustrate this idea with the simple case of the  (locally finite) variety $\DLat$ of distributive lattices. It is straightforward to show that for each $n\in\N$, 
\[
\De_n:=\big\{\bigwedge_{i \in I} y_i\, \le\!\!\!\bigvee_{j \in [n]{\setminus}I} \!\!\!\!y_j \mid \emptyset \neq I \subsetneq [n]\big\}
\]
 is a minimal $\DLat$-refuting set of equations for $\ybar = \{y_1,\dots,y_n\}$. We first observe that, using distributivity, the set of equations of the form $s\le t$, where $s$ is a join of meets of variables, and $t$ is a meet of joins of variables, is $\DLat$-refuting for $\ybar$. We then obtain the minimal $\DLat$-refuting set $\De_n$ using the fact that for $i\in\{1,2\}$,
 \[
 \{s_1 \lor s_2 \le t\} \adm{\DLat} \{s_i \le t\} \quad\text{and}\quad \{s \le t_1 \land t_2\} \adm{\DLat} \{s\le t_i\}.
 \]
It follows that $t_1,\dots,t_n \in \Tmc(\xbar)$ are $\DLat$-dependent if, and only if, for some $\emptyset \neq I \subsetneq [n]$,
\[
\mdl{\DLat} \bigwedge_{i \in I} t_i \le\!\!\!\bigvee_{j \in [n]{\setminus}I} \!\!\!\!t_j.
\]
We therefore obtain further confirmation that dependence in the variety of distributive lattices is decidable.
\end{example}

We devote the rest of this section to the more interesting case of the (non-locally finite) variety $\Lat$ of all lattices.\footnote{Some general properties of Marczewski-dependence in lattices are explored in~\cites{Sza63,Mar63}, but decidability issues are not considered in these papers.} The equational theory of $\Lat$ is decidable. However, it is not coherent. For example, consider the congruence $\The$ of $\F(x,y,z,u,w)$ generated by $\{y\le x, x\le z, x\le u\lor(w\land(u\lor x))\}$. It can be shown that the congruence $\Psi:=\The\cap F(y,z,u,w)^2$ of $\F(y,z,u,w)$ is not finitely generated, yielding a finitely generated sublattice of $\F(x,y,z,u,w)/\The$, isomorphic to  $\F(y,z,u,w)/\Psi$, that is not finitely presented~\cite{KM18}.\footnote{An earlier example given by R.~McKenzie of a finitely generated sublattice of a finitely presented lattice that is not finitely presented can be found in~\cite{FN16}.} On the other hand, since finitely generated sublattices of free lattices are projective~\cite{Kos72} and finitely generated projective algebras are finitely presented (see~\cite{FN16}), every finitely generated free lattice is coherent. To the best of our knowledge, however, there is no known algorithm that constructs a finite presentation for a finitely generated sublattice of a finitely generated free lattice.

We prove that $\Lat$-dependence is decidable by providing a finite minimal $\Lat$-refuting set of equations for $\ybar = \{y_1,\dots,y_n\}$ for each $n\in\N$. The following admissibility properties for $\Lat$ will be useful:

\begin{rlist}
\item[\rm (i)]	$\{x_1\le y, x_2\le y\}\adm{\Lat} \{x_1\lor x_2 \le y\}$
\item[\rm (ii)]	$\{x_1\lor x_2 \le y\}\adm{\Lat} \{x_1\le y\}$ \,and\, $\{x_1\lor x_2 \le y\}\adm{\Lat} \{x_2\le y\}$
\item[\rm (iii)]	$\{x \le y_1\}\adm{\Lat} \{x \le y_1\lor y_2\}$ \,and\, $\{x \le y_2\}\adm{\Lat} \{x \le y_1\lor y_2\}$
\item[\rm (iv)]	$\{x\le y_1, x\le y_2\}\adm{\Lat} \{x \le y_1\land y_2\}$
\item[\rm (v)]	$\{x \le y_1\land y_2\}\adm{\Lat} \{x\le y_1\}$ \,and\, $\{x \le y_1\land y_2\}\adm{\Lat} \{x\le y_2\}$
\item[\rm (vi)]	$\{x_1 \le y\}\adm{\Lat} \{x_1\land x_2 \le y\}$ \,and\, $\{x_2 \le y\}\adm{\Lat} \{x_1\land x_2 \le y\}$.
\end{rlist}
We will also need the fact that each generator $y$ of a free lattice is both join- and meet-irreducible, which can be expressed as follows:
\begin{rlist}
\item[\rm (vii)]	$\mdl{\V}y \le t_1\lor t_2 \enspace\Longrightarrow\:\enspace\mdl{\V}y \le t_1\, \text{ or } \mdl{\V}y \le t_2$
\item[\rm (viii)]	$\mdl{\V}s_1\land s_2 \le y \enspace\Longrightarrow\:\enspace\mdl{\V}s_1 \le y\, \text{ or } \mdl{\V}s_2 \le y$.
\end{rlist}
Finally, we will make crucial use of the following property of admissibility in lattices, known as {\em Whitman's condition}~\cite{Whi41}:
\[
\{x_1 \land x_2 \le y_1 \lor y_2\} \adm{\Lat} \{x_1 \le y_1 \lor y_2,\, x_2 \le y_1 \lor y_2,\, x_1 \land x_2 \le y_1,\,x_1 \land x_2 \le y_2\}.
\]
All these properties can be established directly as properties of free lattices (see, e.g.,~\cite{FN16}) or follow as easy consequences of the completeness of a simple analytic Gentzen system for lattices (see,~e.g.,~\cite{Met11}).

\begin{theorem}\label{thm:lat}
For each $n\in\N$, the following is a minimal $\Lat$-refuting set of equations for $\ybar = \{y_1,\dots,y_n\}$:
\[
\De_n:=\big\{y_i\le\!\!\!\!\!\bigvee_{j \in [n]\setminus\{i\}}\!\!\!\!\!\! y_j \mid i\in[n]\big\}
\:\cup\:
\big\{\!\!\!\!\!\bigwedge_{j \in [n]\setminus\{i\}}\!\!\!\!\!\! y_j \le y_i\mid i\in[n]\big\}.
\]
\end{theorem}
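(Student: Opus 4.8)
The plan is to establish the two implications defining ``$\De_n$ is $\Lat$-refuting'' and then minimality. For the implication $\{\eps\}\adm{\Lat}\De_n\Rightarrow\notmdl{\Lat}\eps$ I would argue contrapositively: if $\mdl{\Lat}\eps$, then the identity substitution validates $\eps$, while it validates no $\de\in\De_n$ because each equation in $\De_n$ fails in $\F(\ybar)$ (the generators are join- and meet-independent, by (vii) and (viii)); hence $\{\eps\}\not\adm{\Lat}\De_n$. Throughout I assume $n\ge 2$, the case $n=1$ being trivial since then every equation over $\{y_1\}$ is valid and $\De_1=\emptyset$ is vacuously refuting.

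For the converse, $\notmdl{\Lat}\eps\Rightarrow\{\eps\}\adm{\Lat}\De_n$, I would write $\eps$ as $s\eq t$; non-validity gives $s\not\le t$ or $t\not\le s$ in $\F(\ybar)$, and since $\{s\eq t\}\adm{\Lat}\{s\le t\}$, it suffices to prove that $s\not\le t$ implies $\{s\le t\}\adm{\Lat}\De_n$. I would do this by induction on $|s|+|t|$, following Whitman's decision procedure for $\le$ in free lattices. If $s=s_1\lor s_2$, then $s_i\not\le t$ for some $i$, and chaining the instance $\{s_1\lor s_2\le t\}\adm{\Lat}\{s_i\le t\}$ of (ii) with the induction hypothesis by cut gives the claim; the case $t=t_1\land t_2$ is dual, via (v). If $s=s_1\land s_2$ and $t=t_1\lor t_2$, the relevant instance of Whitman's condition (W) reduces $\{s\le t\}$ to the four strictly smaller inequalities $s_1\le t_1\lor t_2$, $s_2\le t_1\lor t_2$, $s_1\land s_2\le t_1$, $s_1\land s_2\le t_2$, each of which still fails in $\F(\ybar)$ (any one of them would force $s\le t$); four applications of the induction hypothesis and cut then yield $\{s\le t\}\adm{\Lat}\De_n$.

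The main obstacle is the core cases in which one side is a single generator, since here the irreducibility facts (vii) and (viii) cannot be invoked directly: admissibility ranges over substitutions sending generators to arbitrary elements. The key device I would use is the auxiliary claim, proved by a routine induction on $t$, that for every term $t(\ybar)$ and every $p\in[n]$,
\[
\mdl{\Lat}y_p\le t \quad\text{or}\quad \mdl{\Lat}\,t\le\bigvee_{j\in[n]\setminus\{p\}}y_j ,
\]
together with its order dual. Then if $s=y_p$ and $y_p\not\le t$, the claim gives $\mdl{\Lat}t\le\bigvee_{j\ne p}y_j$, whence $\{y_p\le t\}\adm{\Lat}\{y_p\le\bigvee_{j\ne p}y_j\}$ by transitivity, and weakening to $\De_n$ finishes the case; dually, if $t=y_q$ and $s\not\le y_q$, the dual claim yields $\{s\le y_q\}\adm{\Lat}\{\bigwedge_{j\ne q}y_j\le y_q\}$. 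This disposes of every core case and completes the induction, establishing that $\De_n$ is $\Lat$-refuting.

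For minimality I would show that no $\de^*\in\De_n$ is redundant by producing a substitution that validates $\de^*$ but no other member of $\De_n$; taking $\eps:=\de^*$ (which is not valid) then exhibits an equation for which the refuting biconditional fails for $\De_n\setminus\{\de^*\}$, and hence for any proper subset omitting $\de^*$. For $\de^*=(y_k\le\bigvee_{j\ne k}y_j)$ I would map $y_j\mapsto x_j$ for $j\ne k$ and $y_k\mapsto x_k\land\bigvee_{j\ne k}x_j$ into a free lattice on fresh generators $x_1,\dots,x_n$: this validates $\de^*$ while, using that the $x_j$ are join- and meet-prime, all remaining covering equations fail; the order-dual assignment handles $\de^*=(\bigwedge_{j\ne k}y_j\le y_k)$. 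These verifications are routine for $n\ge 3$ and degenerate without incident for $n=2$.
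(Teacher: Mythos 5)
Your proof is correct, and it shares the paper's overall skeleton: the easy direction of the refuting property via the failure of every member of $\De_n$ in $\F(\ybar)$, the reduction of equations to inequations, an induction on the size of an inequation $s\le t$ that propagates non-validity through joins on the left and meets on the right, Whitman's condition for the case $s_1\land s_2\le t_1\lor t_2$, cut to assemble $\{\eps\}\adm{\Lat}\De_n$, and minimality via a substitution validating one member of $\De_n$ and no other. The genuine difference lies in how you dispose of the cases where one side is a generator. The paper keeps these inside the main induction: its base case treats inequations between a variable and a join (dually, meet) of variables, and two extra inductive cases handle $(s_1\lor s_2)\land s_3\le y_i$ and its dual, using the irreducibility properties (vii)/(viii) to pass to the strictly smaller inequations $s_j\land s_3\le y_i$ (resp.\ $y_i\le t_j\lor t_3$) before applying the induction hypothesis. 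You instead isolate a dichotomy lemma: for every term $t$ and every $p$, either $\mdl{\Lat}y_p\le t$ or $\mdl{\Lat}t\le\bigvee_{j\ne p}y_j$ (and dually), proved by an immediate induction on $t$ that needs no irreducibility facts at all. This lemma settles every generator case in one stroke, by transitivity of lattice consequence plus weakening, and without invoking the main induction hypothesis, so your case analysis is shorter and the base case trivializes; the price is one extra (easy, and indeed true) lemma, which amounts to the fact that $\F(\ybar)$ is covered by the filter of $y_p$ and the ideal of $\bigvee_{j\ne p}y_j$. Your minimality witness $y_k\mapsto x_k\land\bigvee_{j\ne k}x_j$ also differs from the paper's $y_1\mapsto(y_2\land z)\lor\dots\lor(y_n\land z)$ with a fresh variable $z$, but both are verified the same way, via join- and meet-primeness of free generators, and yours degenerates without trouble at $n=2$, where the two halves of $\De_2$ coincide.
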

\begin{proof}
Note first that clearly $\not\mdl{\Lat}\de$ for each $\de\in\De_n$. Also $\{\de\}\not\adm{\V}\De_n\mathop{\setminus}\{\de\}$  for each $\de\in\De_n$. E.g., if $\de$ is $y_1 \le y_2 \lor\cdots\lor y_n$, let $\si$ be the substitution mapping $y_1$ to $(y_2 \land z) \lor\cdots\lor (y_n \land z)$ and $y_i$ to $y_i$ for $i\in\{2,\dots,n\}$. Then $\mdl{\Lat}\si(\de)$, but $\not\mdl{\Lat}\si(\eps)$ for each $\eps\in\De_n$.

Hence it suffices now without loss of generality (since an equation $s \eq t$ can always be replaced by inequations $s\le t$ and $t\le s$) to prove that for any inequation $\eps(\ybar)$,
\[
\notmdl{\V}\eps\enspace \Longrightarrow\enspace \{\eps\}\adm{\V}\De_n,
\]
proceeding by induction on the number of symbols in $\eps$. For the base case, observe that if $\eps$ is an inequation with one variable on the left and a join of variables on the right, or one variable on the right and a meet of variables on the left, the claim follows directly from the definition of $\De_n$. For the induction step, we consider the following cases:
\begin{rlist}

\item[\rm (a)]	 Suppose that $\eps$ is $s_1\lor s_2\le t$ and $\notmdl{\V}\eps$. Then, by property (i), $\notmdl{\V}s_1 \le t$ or $\notmdl{\V}s_2 \le t$, and, by the induction hypothesis, $\{s_1\le t\}\adm{\V}\De_n$ or $\{s_2\le t\}\adm{\V}\De_n$. By property (ii), $\{\eps\}\adm{\V}\{s_1\le t\}$ and $\{\eps\}\adm{\V}\{s_2\le t\}$, so also $\{\eps\}\adm{\V}\De_n$.

\item[\rm (b)]	 Suppose that $\eps$ is $s \le t_1\land t_2$ and $\notmdl{\V}\eps$. Then, by property (iv), $\notmdl{\V}s\le t_1$ or $\notmdl{\V}s\le t_2$, and, by the induction hypothesis, $\{s\le t_1\}\adm{\V}\De_n$ or $\{s\le t_2\}\adm{\V}\De_n$. By property (v), $\{\eps\}\adm{\V}s\le t_1$ and $\{\eps\}\adm{\V}s\le t_2$, so also $\{\eps\}\adm{\V}\De_n$.

\item[\rm (c)]	 Suppose that $\eps$ is $s_1\land s_2 \le t_1\lor t_2$ and $\notmdl{\V}\eps$. Then, by properties (iii) and (vi), $\notmdl{\V}s_1\le t_1\lor t_2$, $\notmdl{\V}s_2\le t_1\lor t_2$, $\notmdl{\V}s_1\land s_2\le t_1$, and  $\notmdl{\V}s_1\land s_2\le t_2$. So, by the induction hypothesis, $\{s_1\le t_1\lor t_2\}\adm{\V}\De_n$, $\{s_2\le t_1\lor t_2\}\adm{\V}\De_n$, $\{s_1\land s_2\le t_1\}\adm{\V}\De_n$, and $\{s_1\land s_2\le t_2\}\adm{\V}\De_n$. But also $\{\eps\}\adm{\V}\{s_1\le t_1\lor t_2,s_2\le t_1\lor t_2,s_1\land s_2\le t_1,s_1\land s_2\le t_2\}$, by Whitman's condition, so $\{\eps\}\adm{\V}\De_n$.

\item[\rm (d)]	 Suppose that $\eps$ is (up to a permutation of meets) $(s_1 \lor s_2) \land s_3\le y_i$ and $\notmdl{\V}\eps$. Then, by properties (vi) and (i), $\notmdl{\V} s_3 \le y_i$ and either $\notmdl{\V} s_1 \le y_i$ or $\notmdl{\V}  s_2 \le y_i$. Hence, by property (viii), either $\notmdl{\V}s_1\land s_3 \le y_i$ or  $\notmdl{\V}s_2\land s_3 \le y_i$, and, by the induction hypothesis, either $\{s_1\land s_3 \le y_i\}\adm{\V}\De_n$ or  $\{s_2\land s_3\le y_i\}\adm{\V}\De_n$. By the monotonicity of the lattice operations, $\{(s_1 \lor s_2) \land s_3\le y_i\}\mdl{\Lat}s_j\land s_3 \le y_i$ for $j\in\{1,2\}$, so also $\{\eps\}\adm{\V}\{s_j\land s_3 \le y_i\}$ for $j\in\{1,2\}$. Hence $\{\eps\}\adm{\V}\De_n$.

\item[\rm (e)]	 Suppose that $\eps$ is (up to a permutation of joins) $y_i\le (t_1 \land t_2) \lor t_3$ and $\notmdl{\V}\eps$. Then, by properties (iii) and (v), $\notmdl{\V} y_i\le t_3$ and either $\notmdl{\V} y_i\le t_1$ or $\notmdl{\V}y_i\le t_2$. Hence $\notmdl{\V}y_i\le t_1\lor t_3$ or  $\notmdl{\V}y_i\le t_2\lor t_3$, and, by the induction hypothesis, either $\{y_i\le t_1\lor t_3\}\adm{\V}\De_n$ or  $\{y_i\le t_2\lor t_3\}\adm{\V}\De_n$. By the monotonicity of the lattice operations, $\{y_i\le (t_1 \land t_2) \lor t_3\}\mdl{\Lat}y_i\le t_j\lor t_3$  for $j\in\{1,2\}$, so also $\{\eps\}\adm{\V}\{y_i\le t_j\lor t_3\}$ for $j\in\{1,2\}$. Hence $\{\eps\}\adm{\V}\De_n$. \qedhere

\end{rlist}
\end{proof}

\begin{corollary}
The terms $t_1,\dots,t_n \in \Tmc(\xbar)$ are $\Lat$-dependent if, and only if, for some $i \in \{1,\dots,n\}$,
\[
\mdl{\Lat} t_i \le\!\!\!\!\!\!  \bigvee_{j \in [n]{\setminus}\{i\}} \!\!\!\!\! t_j
\quad\text{or}\quad
\mdl{\Lat} \!\!\!\!\!\! \bigwedge_{j \in [n]{\setminus}\{i\}}\!\!\!\!\!\! t_j \le t_i.
\]
Hence dependence in the variety of lattices is decidable.
\end{corollary}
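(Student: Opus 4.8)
The plan is to read the corollary off directly from Theorem~\ref{thm:lat} together with the refuting-set lemma stated just before Example~\ref{exam:dlat}, since all of the genuine content has already been established in showing that $\De_n$ is $\Lat$-refuting.

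First I would invoke that lemma with $\V = \Lat$ and $\De = \De_n$. Theorem~\ref{thm:lat} asserts that $\De_n$ is a \emph{minimal} $\Lat$-refuting set for $\ybar = \{y_1,\dots,y_n\}$; here only the refuting property is needed, so minimality plays no role. The lemma then yields that $t_1,\dots,t_n \in \Tmc(\xbar)$ are $\Lat$-dependent if, and only if, $\mdl{\Lat}\de(t_1,\dots,t_n)$ for some $\de\in\De_n$. Next I would unwind the substitution: the members of $\De_n$ are precisely the inequations $y_i\le\bigvee_{j\in[n]\setminus\{i\}} y_j$ and $\bigwedge_{j\in[n]\setminus\{i\}} y_j\le y_i$ for $i\in[n]$, so replacing each $y_j$ by $t_j$ turns ``$\mdl{\Lat}\de(t_1,\dots,t_n)$ for some $\de\in\De_n$'' verbatim into the displayed disjunction, over $i\in\{1,\dots,n\}$, of $\mdl{\Lat} t_i \le \bigvee_{j\in[n]\setminus\{i\}} t_j$ and $\mdl{\Lat}\bigwedge_{j\in[n]\setminus\{i\}} t_j \le t_i$. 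This is exactly the stated characterization.

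For the decidability claim I would note that $\De_n$ is finite and that each condition $\mdl{\Lat}\de(t_1,\dots,t_n)$ is a single equational validity in $\Lat$ (recalling that $s\le t$ abbreviates $s\land t\eq s$). Since the equational theory of $\Lat$ is decidable, each of these finitely many checks terminates, and hence $\Lat$-dependence is decidable. I do not expect any real obstacle here: the corollary is pure bookkeeping on top of the theorem, the one point meriting a word of care being the confirmation that the lemma's hypotheses are met. Everything difficult---the induction in Theorem~\ref{thm:lat} using Whitman's condition and the join/meet-irreducibility of free generators---lies upstream.
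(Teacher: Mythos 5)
Your proposal is correct and coincides with the paper's own (implicit) argument: the corollary is stated without proof precisely because it follows by instantiating the refuting-set lemma with $\V=\Lat$ and $\De=\De_n$ from Theorem~\ref{thm:lat}, then appealing to the finiteness of $\De_n$ and the decidability of the equational theory of $\Lat$, exactly as you do. Your observation that minimality of $\De_n$ is not needed here is also accurate.
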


%%%%%%%%%%%%%%%%%%%%%%%%%%%%%%%%%%%%%%%%%%%%%%%%

\section{Open Problems}\label{sec:openproblems}

We conclude with a short list of open problems:

\begin{rlist}

\item[(1)]
Proofs of uniform interpolation can be quite intricate. In particular, Pitts' constructive proof of this property for $\lgc{IPC}$ involves a complicated definition of left and right interpolants that is checked by induction on derivations in a terminating sequent calculus. It therefore makes sense to seek a simpler proof that $\lgc{IPC}$-dependence is decidable. Such a proof might perhaps use the fact that subalgebras of finitely generated free Heyting algebras are projective~\cite{Ghi99} and hence that finitely generated subalgebras of finitely generated free Heyting algebras are finitely presented. The challenge would then be to provide a simple algorithm for producing finite presentations of these subalgebras. Similarly, we expect that there should be a more direct proof of decidability for dependence in the variety of modal algebras that does not rely on the bisimulation-based method of~\cite{LW11} for calculating uniform interpolants.

\item[(2)]
It follows from Theorem~\ref{thm:lat} and Whitman's condition that the minimal $\DLat$-refuting sets of equations described in Example~\ref{exam:dlat} also serve as (non-minimal) $\Lat$-refuting sets of equations. This raises the question of whether it is the case for {\em any} variety of lattices $\V$ that $t_1,\dots,t_n \in \Tmc(\xbar)$ are $\V$-dependent if, and only if, for some $\emptyset \neq I \subsetneq [n]$,
\[
\mdl{\V} \bigwedge_{i \in I} t_i \le\!\!\!\bigvee_{j \in [n]{\setminus}I} \!\!\!\!t_j.
\]
Note that it is easily checked (since the free lattice on two generators is finite) that this is true for $n=2$; that is, in any variety $\V$ of lattices, $t_1,t_2$ are $\V$-dependent if, and only if, $\mdl{\V}t_1\le t_2$ or $\mdl{\V}t_2\le t_1$. 

\item[(3)]
For all the varieties considered in this paper, the dependence problem is decidable. It would therefore be interesting to know of an example (if one exists) of a variety with a decidable equational theory for which dependence is undecidable.

\item[(4)]
The decidability of dependence is an open problem for most non-locally finite varieties associated to non-classical logics. In particular, varieties of modal algebras for modal logics such as $\lgc{T}$, $\lgc{K4}$, $\lgc{S4}$, and $\lgc{KD}$ and varieties of pointed residuated lattices for  substructural logics such as $\lgc{FL}$ (the full Lambek calculus), $\lgc{MTL}$ (monoidal $t$-norm logic), $\lgc{R}$ (relevant logic), and $\lgc{MALL}$ (multiplicative additive linear logic), all fail to be coherent~\cites{KM18,KM18a} and it is not known if their finitely generated free algebras are coherent.

\end{rlist}

%%%%%%%%%%%%%%%%%%%%%%%%%%%%%%%%%%%%%%%%%%%%%%%%

\bibliographystyle{model1a-num-names}

\begin{bibdiv}
\begin{biblist}
\bib{ADO14}{article}{
  title={Uniform interpolation in substructural logics},
  author={Alizadeh, M.},
  author={Derakhshan, F.},
  author={Ono, H.},
  journal={Rev. Symb. Log.},
  volume={7},
  number={3},
  pages={455--483},
  year={2014},
}

\bib{AF88}{book}{
  author={M.E. Anderson and T.H. Feil},
  title={Lattice-Ordered Groups: An Introduction},
  publisher={Springer},
  year={1988},
}

\bib{Bil07}{article}{
  title={Uniform interpolation and propositional quantifiers in modal logics},
  author={B{\'i}lkova, M.},
  journal={Studia Logica},
  volume={85},
  pages={1--31},
  year={2007},
}

\bib{BS81}{book}{
  author={Burris, S.},
  author={Sankappanavar, H.P.},
  title={A Course in Universal Algebra},
  series={Graduate Texts in Mathematics},
  number={78},
  publisher={Springer},
  date={1981},
}

\bib{CM15}{article}{
  author={L.M. Cabrer},
  author={G. Metcalfe},
  journal={J. Pure Appl. Algebra},
  number={9},
  pages={4229--4253},
  title={Admissibility via natural dualities},
  volume={219},
  year={2015},
}

\bib{CFR96}{article}{
  author={C.M. Campbell},
  author={E.F. Robertson},
  author={N. Ru{\v s}kuc},
  journal={J. Algebra},
  pages={1--21},
  title={On subsemigroups of finitely presented semigroups},
  volume={180},
  year={1996},
}

\bib{Cle06}{article}{
  author={S. Cleary},
  journal={Pacific J. Math.},
  pages={53--61},
  title={Distortion of wreath products in some finitely presented groups},
  volume={228},
  year={2006},
}

\bib{deJ82}{article}{
  author={D.~de~Jongh},
  book={ title={Stud. Log. Found. Math. 110, The L.E.J. Brouwer Centenary Symposium}, address={Berkeley}, publisher = {Elsevier}, },
  pages={51--64},
  title={Formulas of one propositional variable in intuitionistic arithmetic},
  date={1982},
}

\bib{dJC95}{article}{
  author={D. de Jongh},
  author={L.A. Chagrova},
  journal={J. Symbolic Logic},
  number={2},
  pages={498--504},
  title={The decidability of dependency in intuitionistic propositional logic},
  volume={60},
  date={1995},
}

\bib{diN91}{article}{
  author={Di Nola, A.},
  journal={Ricerche di Matematica},
  pages={291--297},
  title={Representation and reticulation by quotients of MV-algebras},
  volume={40},
  date={1991},
}

\bib{FN16}{article}{
  author={Freese, R.},
  author={Nation, J.B.},
  title={Free and finitely presented lattices},
  book={ title={Lattice Theory: Special Topics and Applications: Volume 2}, publisher = {Springer}, editor={Gr{\"a}tzer, G.}, editor={Wehrung, F.}},
  year={2016},
}

\bib{Ghi99}{article}{
  title={Unification in intuitionistic logic},
  author={S. Ghilardi},
  journal={J. Symbolic Logic},
  year={1999},
  number={2},
  volume={64},
  pages={859--880},
}

\bib{vGMT17}{article}{
  author={van Gool, S.},
  author={Metcalfe, G.},
  author={Tsinakis, C.},
  title={Uniform interpolation and compact congruences},
  journal={Ann. Pure Appl. Logic},
  volume={168},
  pages={1927--1948},
  year={2017},
}

\bib{Gou92}{article}{
  author={Gould, V.},
  title={Coherent monoids},
  journal={J. Austral. Math. Soc. Ser. A},
  volume={53},
  year={1992},
  number={2},
  pages={166--182},
}

\bib{Gou95}{article}{
  author={V. Gould},
  title={Independence algebras},
  journal={Algebra Universalis},
  volume={33},
  pages={294--318},
  year={1995},
}

\bib{Gra08}{book}{
  author={Gr{\"a}tzer, G.},
  title={Universal Algebra},
  edition={2},
  publisher={Springer},
  date={2008},
}

\bib{Hum20}{article}{
  author={Humberstone, L.},
  date={2020},
  pages={135--218},
  volume={49},
  title={Explicating logical independence},
  journal={J. Philos. Log.},
}

\bib{Iem16}{article}{
  author={R. Iemhoff},
  title={Consequence relations and admissible rules},
  journal={J. Philos. Log.},
  volume={45},
  number={3},
  pages={327--348},
  year={2016},
}

\bib{KM18}{article}{
  author={T. Kowalski},
  author={G. Metcalfe},
  journal={Ann. Pure Appl. Logic},
  number={7},
  pages={825--841},
  title={Uniform interpolation and coherence},
  volume={170},
  year={2019},
}

\bib{KM18a}{article}{
  author={Kowalski, T.},
  author={Metcalfe, G.},
  title={Coherence in modal logic},
  book={ title = {Proc. AiML'18}, publisher = {College Publications},},
  year={2018},
  pages={236--251},
}

\bib{Kos72}{article}{
  author={Kostinsky, A.},
  journal={Pacific J. Math.},
  pages={111--119},
  title={Projective lattices and bounded homomorphisms},
  volume={40},
  number={1},
  year={1972},
}

\bib{Lem65}{book}{
  author={E.J. Lemmon},
  title={Beginning Logic},
  publisher={Thomas Nelson and Sons},
  year={1965},
}

\bib{LW11}{article}{
  author={Lutz, C.},
  author={Wolter, F.},
  title={Foundations for uniform interpolation and forgetting in expressive description logics},
  book={ title= {Proc. {IJCAI} 2011},},
  pages={989--995},
}

\bib{Mar58}{article}{
  author={E. Marczewski},
  journal={Bull. Acad. Polon. Sci.},
  pages={731{--}736},
  title={A general scheme of the notions of independence in mathematics},
  volume={6},
  date={1958},
}

\bib{Mar59}{article}{
  author={E. Marczewski},
  journal={Bull. Acad. Polon. Sci.},
  pages={611{--}616},
  title={Independence in some abstract algebras},
  volume={7},
  date={1959},
}

\bib{Mar61}{article}{
  author={E. Marczewski},
  journal={Fund. Math.},
  number={1},
  pages={45-61},
  title={Independence and homomorphisms in abstract algebras},
  volume={50},
  date={1961},
}

\bib{Mar63}{article}{
  author={E. Marczewski},
  journal={Colloq. Math.},
  number={10},
  pages={21--23},
  title={Concerning the independence in lattices},
  volume={10},
  year={1963},
}

\bib{Met11}{article}{
  author={Metcalfe, G.},
  title={Proof theory of mathematical fuzzy logic},
  book={ title = {Handbook of Mathematical Fuzzy Logic}, publisher = {King's College Publications}, volume = {I}, year = {2011},},
  pages={209--282},
}

\bib{MMT14}{article}{
  title={Amalgamation and interpolation in ordered algebras},
  author={Metcalfe, G.},
  author={Montagna, F.},
  author={Tsinakis, C.},
  journal={J. Algebra},
  volume={402},
  pages={21--82},
  year={2014},
}

\bib{Mun17}{article}{
  author={Mundici, D.},
  year={2017},
  pages={1429--1439},
  title={Fans, decision problems and generators of free abelian $\ell $ -groups},
  volume={29},
  journal={Forum Math.},
}

\bib{Nar61}{article}{
  author={Narkiewicz, W.},
  year={1961/62},
  pages={333--340},
  title={Independence in a certain class of abstract algebras},
  volume={50},
  journal={Fund. Math.},
}

\bib{Nie21}{article}{
  author={Nielsen, J.},
  year={1921},
  pages={77--94},
  title={Om regning med ikke-kommutative faktorer og dens anvendelse i gruppeteorien},
  journal={Math. Tidsskrift B},
  note={English translation: Math. Scientist {\bf 6} (1981), 73--85},
}

\bib{Pit92}{article}{
  author={Pitts, A.M.},
  title={On an interpretation of second-order quantification in first-order intuitionistic propositional logic},
  journal={J. Symbolic Logic},
  year={1992},
  volume={57},
  pages={33--52},
}

\bib{SP53}{article}{
  author={A.A. Sardinas},
  author={G.W. Patterson},
  journal={Convention Record of the I.R.E.},
  title={A necessary and sufficient condition for the unique decomposition of coded messages},
  year={1953},
  volume={8},
  pages={104--108},
}

\bib{Sou70}{article}{
  author={Soublin, J--P.},
  title={Anneaux et modules coherents},
  journal={J. Algebra},
  volume={15},
  date={1970},
  pages={455--472},
}

\bib{Sch83}{article}{
  author={Schmidt, P.H.},
  title={Algebraically complete lattices},
  journal={Algebra Universalis},
  volume={17},
  date={1983},
  pages={135--142},
}

\bib{Sza63}{article}{
  author={Sz{\'a}sz, G.},
  journal={Colloq. Math.},
  number={1},
  pages={15--20},
  title={Marczewski independence in lattices and semilattices},
  volume={10},
  year={1963},
}

\bib{Sha93}{article}{
  author={Shavrukov, V.Yu.},
  publisher={Polska Akademia Nauk., Warsaw},
  journal={Dissertationes Math. (Rozprawy Mat.)},
  title={Subalgebras of diagonizable algebras theories containing arithmetic},
  volume={323},
  year={1993},
}

\bib{Vaa07}{book}{
  author={Väänänen, J.},
  series={London Mathematical Society Student Texts},
  title={Dependence Logic: A New Approach to Independence Friendly Logic},
  publisher={Cambridge University Press},
  year={2007},
}

\bib{Vis96}{article}{
  title={Uniform interpolation and layered bisimulation},
  author={Visser, A.},
  book={ title={G{\"o}del '96: Logical foundations on mathematics, computer science and physics --- Kurt G{\"o}del's legacy}, editor={H{\'a}jek, P.}, publisher={Springer Verlag}},
  year={1996},
}

\bib{Whe76}{article}{
  author={Wheeler, W.H.},
  title={Model-companions and definability in existentially complete structures},
  journal={Israel J. Math.},
  year={1976},
  volume={25},
  number={3},
  pages={305--330},
}

\bib{Whe78}{article}{
  author={Wheeler, W.H.},
  title={A characterization of companionable, universal theories},
  journal={J. Symbolic Logic},
  year={1978},
  volume={43},
  number={3},
  pages={402--429},
}

\bib{Whi41}{article}{
  author={P. Whitman},
  journal={Ann. of Math.},
  pages={325--329},
  title={Free lattices},
  volume={42},
  year={1941},
}

\bib{YV16}{article}{
  author={Yang, F.},
  author={V{\"{a}}{\"{a}}n{\"{a}}nen, J.},
  title={Propositional logics of dependence},
  journal={Ann. Pure Appl. Log.},
  volume={167},
  number={7},
  pages={557--589},
  year={2016},
}
\end{biblist}
\end{bibdiv}  

%%%%%%%%%%%%%%%%%%%%%%%%%%%%%%%%%%%%%%%%%%%%%%%%%%%%%%%%%%%%%%

\end{document}